\newcommand{\beq}[1]{\begin{equation}\label{#1}}
\newcommand{\enq}[0]{\end{equation}}
\newtheorem{theorem}{Theorem}
\newtheorem{lemma}[theorem]{Lemma}
\newtheorem{cor}[theorem]{Corollary}
\theoremstyle{definition}
\newtheorem{definition}[theorem]{Definition}
\newtheorem{claim}[theorem]{Claim}
\theoremstyle{remark}
\newtheorem{observe}[theorem]{Observation}
\title{Graph decomposition and parity}
\author{Bobby DeMarco \footnotemark $~$ and Amanda Redlich \footnotemark}
\date{}
\begin{document}
\maketitle
\renewcommand{\thefootnote}{\fnsymbol{footnote}}
\footnotetext{ * Supported by the U.S.
Department of Homeland Security under Grant Award Number 2007-ST-104-000006.}
\footnotetext{ $\dag$ Supported by the National Science Foundation under Award No. 1004382.}

\begin{abstract}
Motivated by a recent extension of the zero-one law by Kolaitis and Kopparty, we study the distribution of the number of copies of a fixed disconnected graph in the random graph $G(n,p)$.  We use an idea of graph decompositions to give a sufficient condition for this distribution to tend to uniform modulo $q$.  We determine the asymptotic distribution of all fixed two-component graphs in $G(n,p)$ for all $q$, and we give infinite families of many-component graphs with a uniform asymptotic distribution for all $q$.  
We also prove a negative result, that no recursive proof of the simplest form exists for a uniform asymptotic distribution for arbitrary graphs.
\end{abstract}

\section{Introduction}
A recent paper by Kolaitis and Kopparty \cite{kk} gives an extension of the zero-one law which holds for first-order logic with a parity operator.  The keystone of their proof is that the number of copies (not necessarily induced) of any fixed connected graph is asymptotically uniformly distributed modulo $q$ for any $q$ in the random graph $G(n,p)$.  Other papers have studied this statistic in special cases.  For example, a 2004 paper of Loebl, Matou\v{s}ek, and Pangr\`ac,\cite{LMP} considered the modulo $q$ distribution of triangles, while the 2014 work of Kopparty and Gilmer \cite{justin} gives the distribution of triangles overall.

Here we study the distribution of the number of copies (not necessarily induced) of a fixed \emph{disconnected} graph in $G(n,p)$ modulo $q$.  For convenience, we may say ``the distribution of a graph modulo $q$" to mean the distribution of the number of copies of the graph in $G(n,p)$ with $p$ implicit.  In this paper we will only be speaking of asymptotic distributions, so when discussing distributions we will remove the word asymptotic for brevity.  We say a graph is uniformly distributed if it is uniformly distributed modulo $q$ for all $q$.  We give sufficient conditions for a graph to be uniformly distributed, and we use these conditions to completely characterize the distribution of all 2-component graphs for all $q$.  We then give infinitely large families of uniformly distributed graphs of any component size.

In analyzing these distributions, we developed the concepts of \emph{unique composition} and \emph{decomposition}.  These concepts are related to determining when several connected graphs may be combined to create one large connected graph with certain uniqueness properties.  There are obvious links to the reconstruction conjecture (see \cite{recon1} for a summary), which asks when the subgraphs of a larger graph have slightly different uniqueness properties.  In this paper we give an algorithm for uniquely composing any two feasible graphs, and certain families of three or more graphs.  We also show no generic recursive composition algorithm exists.

The paper is structured as follows.  In the second section we derive a formula for the number of copies of a disconnected graph in a fixed graph $G$ as a function of the number of copies of certain connected graphs in $G$.  In the third section we use this formula to give specific conditions for a disconnected graph to be uniformly distributed.

We show these conditions are satisfied for almost all two-component graphs in the fourth section.  We give an explicit construction for all satisfying graphs.  We also calculate the distribution for all two-component graphs that do not satisfy these conditions.  We then give some examples of infinite families of three or more component graphs that are satisfying.  We conclude this section with a negative result, showing no simple algorithm exists to show a generic graph is satisfying.  In the last section, we discuss areas of further research.
\section{Counting copies}
In this section we give an exact formula for the number of unlabeled copies of a disconnected graph $A=\sqcup_{i=1}^{k} G_{i}$, with non-isomorphic $G_i$, in a host graph $F$.  That is, the number of subgraphs $G_A \subseteq F$ such that $A \simeq G_A$.  For example, if $F$ is $K_4$ and $A$ is two disjoint edges, the number of unlabeled copies of $A$ in $F$ is 3.  Note that this is different from the labeled case, which would give 24 copies.  

Our formula for the number of unlabeled copies of a disconnected graph is given in terms of the number of copies of various connected graphs $H$, and their relationship to the original graph $A$.  Although the formula appears complex, the reasoning behind it is simple.  The main idea is that each copy of $A$ is the product of copies of $G_{1}, G_{2}, \ldots G_{k}$.  Interactions between copies of $G_{i}$ and $G_{j}$ lead to an overcount; the formula uses the principle of inclusion and exclusion to correct for this.  Notice that this is correct only if the $G_i$ are non-isomorphic, hence our assumption that each component is unique.

The simplest case is when $A=G_{1} \sqcup G_{2}$.  For example, let $G_{1}=C_{3}$ and $G_{2}=C_{4}$.  Given fixed host graph $F$, let $N(A)$ be the number of unlabeled copies of $A$ in $F$ (we may also mean the number of unlabeled copies of $A$ in an instance of the random graph $G(n,p)$; this will be clear from context).  Consider $H_i$ as illustrated below.  For ease of discussion, here and throughout the paper, we label the illustrated vertices.  However, the graphs themselves are unlabeled.

We have $$N(A)=N(C_{3})N(C_{4})-N(H_1)-N(H_2)-2N(H_3)-3N(H_4).$$   That is because the total number of disconnected $C_3, C_4$ pairs is the total number of $C_3,C_4$ pairs minus the number of connected pairs.  While $H_1$ and $H_2$ each correspond to exactly one connected pair, $H_3$ is counted twice; once when $C_3=\{1,2,3\}$ and once when $C_3=\{1,3,4\}$.  Similarly, $H_4$ is counted three times: $C_{4}=\{1,2,3,4\}$ or $\{1,2,3,5\}$ or $\{1,5,3,4\}$, and each copy of $C_4$ determines a complementary copy of $C_3$.  (Note that we are counting both induced and non-induced subgraphs; for example, $H_3$ contains no induced $C_4$.)

\begin{figure}[htbp]
\begin{minipage}{0.3\linewidth}
\xy
(0,0)*{1}="1"; (10,0)*{2}="2"; (0,10)*{3}="3"; (20,0)*{4}="4"; (10, -10)*{5}="5"; (20,-10)*{6}="6"; (15, -15)*{H_1};
"1";"2" **\dir{-};
"1";"3" **\dir{-};
"2";"3" **\dir{-};
"2";"4" ** \dir{-};
"4";"6" ** \dir{-};
"6";"5" **\dir{-};
"5";"2" **\dir{-};
\endxy
\end{minipage}%
\hfill
\begin{minipage}{.3\linewidth}

\xy
(0,0)*{1}="1"; (10,0)*{3}="3"; (0,10)*{2}="2"; (10, -10)*{4}="4"; (0,-10)*{5}="5"; (5, -15)*{H_2};
"1";"2" **\dir{-};
"1";"3" **\dir{-};
"2";"3" **\dir{-};
"3";"4" ** \dir{-};
"4";"5" ** \dir{-};
"1";"5" **\dir{-};
\endxy
\end{minipage}%
\hfill
\begin{minipage}{.3\linewidth}

\xy
(0,10)*{}; (0,0)*{1}="1"; (10,0)*{2}="2"; (0,-10)*{4}="4"; (10, -10)*{3}="3"; (5, -15)*{H_3};
"1";"2" ** \dir{-};
"2";"3" **\dir{-};
"3";"4" **\dir{-};
"4";"1" **\dir{-};
"1";"3" **\dir{-};
\endxy
\end{minipage}%
\hfill
\begin{minipage}{.3\linewidth}
\xy
(0,0)*{1}="1"; (10,0)*{2}="2"; (0,-10)*{4}="4"; (10, -10)*{3}="3"; (5,-15)*{5}="5"; (0,10)*{}; (8,-20)*{H_4};
"1";"2" ** \dir{-};
"2";"3" **\dir{-};
"3";"4" **\dir{-};
"4";"1" **\dir{-};
"1";"3" **\dir{-};
"1";"5" **\dir{-};
"5";"3" **\dir{-};
\endxy
\end{minipage}
\end{figure}

We now formalize this ``gluing" idea.
\begin{definition}
A tuple $(G_1, G_2, \ldots G_k, H, H_1, H_2, \ldots H_k)$ is a \emph{gluing} of $G_1 \ldots G_k$ if
\begin{itemize}
\item $H$ is a connected graph
\item $H_{1}, \ldots H_{k}$ are subgraphs of $H$
\item $H_{i} \sim G_{i}$ for all $i \in [k]$
\item $\cup_{i=1}^{k} E(H_i)=E(H)$.
\end{itemize}
We occasionally refer to $H$ itself as a gluing.  The tuple $(H_1, \ldots H_k)$ is a \emph{decomposition} of $H$.  If there exists only one tuple $H_1, \ldots H_k$ such that $(G_1, \ldots G_k, H, H_1, \ldots H_k)$ is a gluing, we say that $(G_1, \ldots G_k, H)$ is \emph{uniquely decomposable}.  In this case, we say that $H[S]$ is the unique subgraph of $H$ induced by $\{H_i\}_{i \in S}$.  We occasionally say that $H$ itself is uniquely decomposable, if $G_1, \ldots G_k$ are clear from context.  If there exists an $H$ such that $(G_1, \ldots G_k, H)$ is uniquely decomposable, we say that $\{G_1, \ldots G_k\}$ is \emph{uniquely composable}.
\end{definition}
We often want to count gluings and decompositions.
\begin{definition}
Given $G_1, \ldots G_k$, $s(H)$ is the number of tuples $(H_1, \ldots H_k)$ such that $(G_1, \ldots G_k, H, H_1,\ldots H_k)$ is a gluing.  The set of gluings $\mathbf{H}$ is the family of graphs $H$ such that $s(H) \neq 0$.
\end{definition}
 Using this notation, we re-state a theorem often used in the theory of graph limits, and implied in \cite{kk}:
\begin{theorem}\label{two}
 For a disconnect graph $A=G_1 \sqcup G_2$ and $G_1 \neq G_2$, $$N(A)=N(G_1)N(G_2)-\sum_{H \in \mathbf{H}} s(H)N(H)$$
\end{theorem}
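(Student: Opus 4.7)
The plan is to interpret the product $N(G_1)N(G_2)$ directly as a count of ordered pairs of subgraphs of $F$, and then partition this count according to whether the two subgraphs meet or not. Specifically, $N(G_1)N(G_2)$ counts pairs $(X_1, X_2)$ where $X_1, X_2 \subseteq F$, $X_1 \simeq G_1$, and $X_2 \simeq G_2$. Because $G_1 \not\simeq G_2$, no subgraph can play both roles, so every such ordered pair is genuinely an unordered pair plus a canonical labeling of its components. I would first verify this identification and then split the family of pairs into two classes: those with $V(X_1) \cap V(X_2) = \emptyset$, and those with $V(X_1) \cap V(X_2) \neq \emptyset$.

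For the disjoint class, I claim the map $(X_1, X_2) \mapsto X_1 \sqcup X_2$ is a bijection onto the set of unlabeled copies of $A$ in $F$. Surjectivity is immediate since any copy of $A$ in $F$ has two components, one isomorphic to $G_1$ and the other to $G_2$, which are forced by the hypothesis $G_1 \not\simeq G_2$; injectivity follows from the same non-isomorphism, which determines which component is $X_1$ and which is $X_2$. Hence the disjoint class contributes exactly $N(A)$.

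For the non-disjoint class, let $H = X_1 \cup X_2$ (with vertex set $V(X_1) \cup V(X_2)$ and edge set $E(X_1) \cup E(X_2)$). Since $V(X_1)\cap V(X_2) \neq \emptyset$, the subgraph $H$ is connected, and by construction $(G_1, G_2, H, X_1, X_2)$ is a gluing, so $H$ (up to isomorphism) lies in $\mathbf{H}$. Conversely, for each isomorphism class $H \in \mathbf{H}$, each of the $N(H)$ copies of $H$ inside $F$ arises from exactly $s(H)$ pairs $(X_1, X_2)$, one for each decomposition of the abstract $H$ into a pair of subgraphs isomorphic to $G_1$ and $G_2$ covering all of $E(H)$. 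Summing over $\mathbf{H}$ yields that the non-disjoint class contributes $\sum_{H \in \mathbf{H}} s(H) N(H)$.

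Combining the two contributions gives $N(G_1)N(G_2) = N(A) + \sum_{H \in \mathbf{H}} s(H) N(H)$, which is the claimed identity after rearrangement. The only real subtlety, and thus where I would be most careful, is the bookkeeping in the non-disjoint class: I need to confirm that the map sending a pair $(X_1,X_2)$ to its isomorphism type $H$ together with the pair of subgraphs $(X_1, X_2)$ inside that specific copy of $H$ is a bijection onto the set of (copy of $H$, decomposition) pairs. This amounts to checking that different pairs $(X_1, X_2)$ with the same union $H \subseteq F$ correspond to distinct decompositions of $H$, and conversely that distinct decompositions of a fixed copy of $H$ inside $F$ give distinct ordered pairs — both of which are straightforward from the definition of decomposition, using again $G_1 \not\simeq G_2$ to avoid ordering ambiguities.
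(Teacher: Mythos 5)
Your proof is correct and follows essentially the same approach as the paper's: interpret $N(G_1)N(G_2)$ as counting pairs of subgraphs, split into vertex-disjoint pairs (which biject with copies of $A$) and intersecting pairs (which biject with (copy of $H$, decomposition) pairs, totaling $\sum_{H\in\mathbf{H}} s(H)N(H)$). The paper's version is terser but rests on exactly this double count; your write-up just makes the bijections and the role of $G_1 \not\simeq G_2$ explicit.
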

\begin{proof}
The number of copies of $A$ is the number of $G_1, G_2$ pairs overall, less the number of $G_1,G_2$ pairs that intersect.  Each intersecting $G_1,G_2$ pair corresponds to a decomposition of a copy of some $H$, so the total number of intersecting pairs is $\sum_{H \in \mathbf{H}} s(H)N(H)$.
\end{proof}
It is tempting to generalize this to the three-or-more component case as $$N(A) ``=" \prod_{i=1}^{k} N(G_{i})-\sum_{H \in \mathbf{H}} s(H)N(H),$$ but the truth is more complicated.  Along with $H$ that may be decomposed into $G_1, \ldots G_k$, we must consider $H$ that are decomposable into any subset of $G_i$.
For example, if $A$ has components $G_1, G_2, G_3$ then we must be concerned with gluings of the forms $(G_1, G_2, H, H_1, H_2)$, $(G_1, G_3, H, H_1, H_3)$, $(G_2, G_3, H, H_2, H_3)$, and $(G_1, G_2, G_3, H, H_1, H_2, H_3)$.  In order to deal with this complication, we define some new terms.  First, some notation about partitions.
\begin{definition}
Consider the partitions of $[k]$ under partial ordering by refinement, where we use $\pi$ and $\rho$ for partitions of $[k]$ and say $\pi < \rho$ if $\pi$ is a refinement of $\rho$. 
 If $\pi < \rho$, and $T$ is a block of $\rho$, then let $\pi(T)$ be the family of blocks in $\pi$ such that $\cup_{S \in \pi(T)} S=T$.  For example, if $k=4$, $\pi=\{ \{12\} \{3\} \{4\}\}$, and $\rho=\{\{123\} \{4\}\}$, then $\pi(\{123\})=\{ \{12\}\{3\}\}$.
\end{definition}

Now we use partitions to classify gluings.  Each connected component  corresponds to a set in a partition, and the family of gluings is broken into sub-families according to the partitions they generate.
\begin{definition}
Let $\mathbf{H}_{\pi}$ be the family of graphs $H=\sqcup_{S \in \pi} H_{S}$ where each $H_{S}$ may be decomposed (not necessarily uniquely) into $\{H_{i}\}_{i \in S}$.  For example, $\mathbf{H}_{\mathbf{0}}=A$ and any $H \in \mathbf{H}_{\mathbf{1}}$ is a connected graph; $\mathbf{0}$ is the minimum and $\mathbf{1}$ is the maximum element in the partial order. 
\end{definition}
We also count possible decompositions of gluings.  Since we are now considering a broader range of gluings, we must add a subscript to clarify which graphs are being glued.
\begin{definition}
Given $G_1, \ldots G_k$ and $S \subseteq [k]$, let $s_{S}(G')$ be the number of ways the graph $G'$ may be decomposed into copies of $\{G_{i}\}_{i \in S}$
\end{definition}
It may be useful to discuss decompositions into graphs other than our original components $G_{i}$.
\begin{definition}
Given a graph $H_{S}$ for each $ S \in \pi$, $s_{\pi(T)}(G')$ is the number of ways $G'$ may be split into $\{ H_{S} \}_{ S \in \pi(T)}$.  If there exists some $\{H_{S}\}_{S \in \pi}$ such that $s_{\pi([k])}(G') \neq 0$, say that $G'$ is \emph{compatible} with $\pi$.
\end{definition}
Finally, we need to count ``component" decompositions.
\begin{definition}
For any graph $H$, let $p(H)$ be the number of isomorphic permutations of the components of $H$.  That is, if the components of $H$ are $i_1$ copies of some connected graph $B_1$, $i_2$ copies of $B_2$, up to $i_k$ copies of $B_k$, then $p(H)=i_1!i_2!\cdot \ldots\cdot i_k!$.  For example, if $H$ is a five-component graph consisting of three $C_4$ and two $K_5$, then $p(H)=3!2!$: there are 3! ways to decide which $C_4$ is which and 2! ways to decide which $K_5$ is which.
\end{definition}
This notation allows us to give a more general recursion for the number of copies of a graph with an arbitrary number of connected components.  Although the notation is daunting, it is a simple generalization of the ideas in the two-component case.
\begin{theorem}\label{rec}
For a graph $A$=$\sqcup _{i=1}^{k}G_{i}$ with $G_i \neq G_j$ for all $i \neq j$,
\begin{eqnarray*}
N(A)=\prod_{i=1}^{k} N(G_{i})-\sum_{\mathbf{0}<\pi \leq [k]} \sum_{H \in \mathbf{H}_{\pi}}N(\sqcup_{S \in \pi} H_{S})p(H)\prod_{S \in \pi}s_{S}(H_{S})
\\ = \prod_{i=1}^{k} N(G_{i})-\sum_{\mathbf{0}<\pi \leq [k]} \sum_{H \in \mathbf{H}_{\pi}}p(H)\prod_{S \in \pi}s_{S}(H_{S})\left( \prod_{S \in \pi}N(H_{S})-\sum_{\rho > \pi} \sum_{J \in \mathbf{H}_{\rho}}p(J)\prod_{T \in \rho} s_{\pi(T)}(J_{T})p(J_{T})N(\sqcup_{T \in \rho}J_{T})\right)
\end{eqnarray*}
\end{theorem}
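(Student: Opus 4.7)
The plan is to prove both equalities by extending the partition-and-count argument of Theorem~\ref{two} to the $k$-component setting, applied once to obtain the first equation and then once more to obtain the second. The starting point is that $\prod_{i=1}^{k} N(G_{i})$ counts ordered $k$-tuples $(H_{1},\ldots,H_{k})$ of subgraphs of the host graph $F$ with $H_{i}\cong G_{i}$; because the $G_{i}$ are pairwise non-isomorphic, the coordinates are distinguishable, so no overcount arises in this product.

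Each such tuple determines a partition $\pi$ of $[k]$ by declaring $i\sim_{\pi} j$ if and only if $H_{i}$ and $H_{j}$ lie in the same connected component of $\bigcup_{i} H_{i}$. Tuples of type $\mathbf{0}$ are in bijection with copies of $A$, contributing $N(A)$. For $\pi>\mathbf{0}$ I would count the tuples of type $\pi$ by fixing, for each block $S\in\pi$, a connected gluing $H_{S}$ of $\{G_{i}\}_{i\in S}$; then $H=\bigsqcup_{S\in\pi} H_{S}$ ranges over $\mathbf{H}_{\pi}$. A specific $H$ appears in $F$ in $N(H)$ unlabeled copies; each copy admits $p(H)$ labelings of its components by the blocks of $\pi$ (the factor accounts for distinct blocks $S\neq S'$ with $H_{S}\cong H_{S'}$); and each labeled copy decomposes back into the original $(H_{i})_{i\in[k]}$ in $\prod_{S\in\pi} s_{S}(H_{S})$ ways. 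Summing over $H\in\mathbf{H}_{\pi}$ and $\pi>\mathbf{0}$ and solving for $N(A)$ yields the first equation.

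For the second equation I would apply the same partition-and-count reasoning one level deeper to each inner $N(H)=N(\bigsqcup_{S\in\pi} H_{S})$. Here $H$ is treated as a disconnected graph with components indexed by $\pi$; ordered tuples of copies of the $H_{S}$ in $F$ are then classified by partitions of $\pi$, and by the definition of $\pi(T)$ these correspond bijectively to partitions $\rho$ of $[k]$ with $\rho>\pi$. For each $T\in\rho$, the connected gluing of $\{H_{S}\}_{S\in\pi(T)}$ into $J_{T}$ contributes a factor $s_{\pi(T)}(J_{T})$ (since we are splitting into the intermediate graphs $H_{S}$, not into the $G_{i}$ directly), together with a second automorphism factor $p(J_{T})$ covering the case when several of the $H_{S}$ inside $\pi(T)$ coincide; an overall $p(J)$ accounts for permutations of isomorphic components of $J$ itself. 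Substituting the resulting expression for $N(H)$ into the first equation gives the second.

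The main obstacle is keeping the two layers of automorphism corrections straight. One must verify (i) that $\mathbf{H}_{\pi}$ is implicitly indexed by labeled decompositions $(H_{S})_{S\in\pi}$ rather than by isomorphism classes of the underlying disjoint unions, so that $p(H)$ is the correct compensation for the passage from labeled to unlabeled components, and (ii) that at the second level $s_{\pi(T)}$ counts splittings into the intermediate graphs $H_{S}$ rather than into the base components $G_{i}$, with $p(J_{T})$ playing the analogous automorphism role for the inner layer. Once these conventions are pinned down, both equalities reduce to a direct enumeration of ordered tuples classified by their connectivity partitions, exactly as in Theorem~\ref{two}.
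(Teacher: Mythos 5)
Your proposal follows essentially the same route as the paper: write $\prod_i N(G_i)$ as a sum over ordered tuples, classify the tuples by the connectivity partition $\pi$ they induce, use $p(H)\prod_S s_S(H_S)$ to convert unlabeled copies of $\sqcup_S H_S$ to such tuples, and then for the second display substitute the first equation (applied now to the disconnected graph $\sqcup_S H_S$ with component set $\{H_S\}_{S\in\pi}$, with decompositions into the intermediate pieces $H_S$) into itself. Your writeup is somewhat more explicit than the paper's very terse argument, but it is the same idea, not a different method. One small inaccuracy in your exposition: you describe the inner factor $p(J_T)$ as ``covering the case when several of the $H_S$ inside $\pi(T)$ coincide,'' but each $J_T$ is connected (it is a gluing), so $p(J_T)=1$ identically and this factor does no work; coincident $H_S$ are already handled by the splitting count $s_{\pi(T)}(J_T)$. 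Also, the partitions of $\pi$ biject with $\rho\geq\pi$, not $\rho>\pi$; the diagonal case $\rho=\pi$ is the one you isolate as $\prod_S N(H_S)$'s ``good'' term, so this is only a slip of phrasing, not of substance.
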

\begin{proof}
Now that we have the proper definitions, the proof is short.  As usual, we count the number of copies of $A$ by finding the product of copies of its components, then subtract the overcount.  The ``overcounted" graphs are those in which at least two $G_i$ intersect with each other, i.e. those corresponding to a non-$\mathbf{0}$ partition.  

We also need to be careful about the possibility of intersecting components being isomorphic to a third component.  For instance, returning to our initial $C_3 \sqcup C_4$ example, if we let $A$ be the three-component graph $C_3 \sqcup C_4 \sqcup H_1$, one of the gluings in $H_{\{1,2\} \{3\}}$ is $H_1 \sqcup H_1$.  This gluing should be counted twice because there are two choices for which component is generated by $C_3 \sqcup C_4$.  Therefore we have the first line of the equation.

To see why the second line is true, simply apply the first equation to each $N(\sqcup_{S \in \pi} H_{S})$ term individually.  Now the relevant partitions are those of which $\pi$ is a refinement, and the decompositions are not into $G_i$ but instead $H_S$.
\end{proof}

This theorem gives a recursive algorithm for calculating $N(A)$ for an arbitrary host graph.  With sufficient computing power, then, we could use it to calculate $N(A)$ for the random graph directly.  This theorem can be used as a starting point that will allow us to give explicit counts of a family of graphs, as well as a sufficient condition for graphs to have certain distributions.  The first step is to expand the recursion to get a simpler formula.

\begin{lemma}\label{gen}
For any graph $A = \sqcup_{i=1}^{k} G_{i}$ with $G_i \neq G_j$ for all $i \neq j$, there exist integers $f_A (H)$ for every $H \in \cup_{\pi \leq [k]} \mathbf{H}_{\pi}$ such that $$N(A)=\prod_{i=1}^{k} N(G_{i})-\sum_{\pi \leq [k]} \sum_{H \in \mathbf{H}_{\pi}}\prod_{S \in \pi}N(H_{S})f_{A}(H)$$

\end{lemma}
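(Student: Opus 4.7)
The plan is to iterate the recursion of Theorem \ref{rec} until every remaining copy-count on the right-hand side is a product of copy-counts of connected graphs. Starting from the first equality of Theorem \ref{rec},
\[
N(A) = \prod_{i=1}^k N(G_i) - \sum_{\mathbf{0}<\pi \leq [k]} \sum_{H \in \mathbf{H}_\pi} c_\pi(H)\, N\bigl(\sqcup_{S \in \pi} H_S\bigr),
\]
with $c_\pi(H) = p(H)\prod_S s_S(H_S)$ a positive integer, every unresolved term $N(\sqcup_S H_S)$ counts copies of a disconnected graph with $|\pi|$ connected pieces. I would then apply the same recursion to each such term (as already done in the second line of Theorem \ref{rec}), which replaces it by $\prod_{S \in \pi} N(H_S)$ plus new unresolved terms $N(\sqcup_{T \in \rho} J_T)$ for strictly coarser partitions $\rho > \pi$ of $[k]$.

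Since each round strictly reduces the number of blocks in every remaining unresolved term, the process terminates in at most $k-1$ rounds. At termination, every term on the right-hand side has the form $(\text{integer}) \cdot \prod_{S \in \pi} N(H_S)$, the coefficient being a product of signs, $p(\cdot)$-values, and $s_\bullet(\cdot)$-values accumulated along the expansion --- all integers. The key observation licensing the classification is that gluing a family $\{H_S\}_{S \in T}$, each of which is itself a gluing of $\{G_i\}_{i \in S}$, yields a gluing of $\{G_i\}_{i \in \cup T}$; so every product $\prod_S N(H_S)$ produced in the expansion really does correspond to some $H \in \mathbf{H}_\pi$ for a partition $\pi$ of $[k]$. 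Defining $f_A(H)$ as the signed sum of coefficients over all appearances of $\prod_S N(H_S)$ then gives an integer, as required.

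The main obstacle is justifying the intermediate applications of Theorem \ref{rec}, because when expanding $N(\sqcup_S H_S)$ the ``components'' $H_S$ need not be pairwise non-isomorphic --- a hypothesis Theorem \ref{rec} imposes as stated. I would handle this by grouping isomorphic $H_S$'s into multiplicity classes and rerunning the same inclusion-exclusion argument with an outer multiplicity factor analogous to the existing $p(H)$ in the sum; this is exactly the role $p$ plays in the original statement, and the same combinatorial argument goes through. The extension is routine but requires careful bookkeeping to verify that after collecting like terms the coefficient of each product $\prod_S N(H_S)$ is an integer, completing the proof.
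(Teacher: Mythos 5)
Your proposal matches the paper's intended proof, which the paper leaves implicit: Lemma \ref{gen} is stated immediately after Theorem \ref{rec} with only the remark that one must ``expand the recursion to get a simpler formula,'' and your iterated application of Theorem \ref{rec}, with termination because each coarsening strictly reduces the number of blocks, is precisely that expansion. Your attention to the fact that the intermediate blocks $H_S$ may be isomorphic, handled by multiplicity factors playing the role of $p(\cdot)$, is a genuine subtlety that the paper glosses over but that is already implicitly addressed by the $p(\cdot)$ factors appearing in Theorem \ref{rec}.
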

Note that $f_A$ is uniquely determined; there is no way to write the number of copies of any connected graph in terms of the number of copies of other connected graphs.  
This is clear by inspection, or from \cite{kk}'s proof that the copies of distinct connected graphs are independently distributed.
\section{Distribution of copies}

As mentioned in the introduction, \cite{kk} proves that, for any constants $p$ and $q$, any $i < q$, and any connected graph $G_0$, the probability of $G(n,p)$ having $i$ copies of $G_0$ modulo $q$ tends to $1/q$ as $n$ tends to infinity.  That is, the distribution of a connected graph in the random graph tends to uniform modulo $q$.  We give exact distributions for the number of copies of any disconnected $G_0$ in $G(n, p)$ modulo $q$ in this section by combining the formulas of the previous section with these results on connected graphs.

The previous section gives exact expressions for the number of copies of a disconnected graph in a particular graph.  The formulas are often difficult to implement.  However, since our goal is the distribution of the count, rather than its exact value, the preceding formulas are enough.  To study the distributions of disconnected graphs, we first recall Theorem 3.2 in \cite{kk}, which we restate here:
\begin{theorem}\label{con}
For any $q >1$ and $p \in (0,1)$, and any family of distinct finite connected graphs $F_1, \ldots F_l$, the distribution of $(N(F_1), \ldots N(F_l))$ modulo $q$ is $2^{-\Omega(n)}$ close to uniform over $[q]^{l}$.
\end{theorem}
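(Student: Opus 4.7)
The plan is to use discrete Fourier analysis over $\mathbb{Z}_q^{l}$ combined with a switching/coupling argument on $G(n,p)$. By standard Fourier inversion, showing that the law of $(N(F_1),\ldots,N(F_l))$ is $2^{-\Omega(n)}$-close to uniform is equivalent to showing that every non-trivial character sum is exponentially small: for each $\mathbf{a} = (a_1,\ldots,a_l) \in \mathbb{Z}_q^{l} \setminus \{0\}$,
$$\left| \mathbb{E}\exp\!\left(\tfrac{2\pi i}{q} \sum_{j=1}^{l} a_j N(F_j) \right)\right| = 2^{-\Omega(n)}.$$
So everything reduces to bounding these character sums.

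To estimate them, I would partition $V=[n]$ into $\Theta(n)$ disjoint ``blocks'' of constant size, each large enough to host a copy of each $F_j$ with some spare vertices. Within each block $B$ I single out one edge $e_B$. Conditioning on the state of every edge except the $e_B$'s, the character sum factorizes up to a phase as
$$\prod_{B} \mathbb{E}\exp\!\left(\tfrac{2\pi i}{q} \Delta_{B} \cdot \mathbf{1}[e_B \in G]\right),$$
where $\Delta_B \in \mathbb{Z}_q$ is the (conditionally deterministic) net change in $\sum_j a_j N(F_j)$ produced by toggling $e_B$. Each factor has modulus strictly less than $1$ whenever $\Delta_B \not\equiv 0 \pmod q$, so a lower bound on the expected number of blocks realizing a ``good'' switch yields the desired $2^{-\Omega(n)}$ bound.

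The heart of the proof is the construction of good switches. Given non-zero $\mathbf{a}$, fix an index $j^*$ with $a_{j^*} \neq 0$ in $\mathbb{Z}_q$. For a given block $B$, I would look for a ``pendant-edge'' embedding of $F_{j^*}$ in $B$ so that, in the conditional graph, $e_B$ lies in a controlled non-zero number of copies of $F_{j^*}$ and in no copy of $F_{j'}$ for $j' \neq j^*$. Connectivity of each $F_j$ and pairwise distinctness of the $F_j$'s are essential: connectivity forces any copy of an $F_{j'}$ using $e_B$ to live inside the block, and distinctness lets me rule out the other $F_{j'}$ by either vertex count or some local combinatorial feature of the chosen embedding. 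Since the configuration depends only on a bounded set of edges, it occurs with some constant probability in a random block, and linearity of expectation then produces $\Theta(n)$ independent good switches.

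The main obstacle I expect is arithmetic rather than combinatorial: a single switch might always shift $\sum_j a_j N(F_j)$ by a value lying in a proper subgroup of $\mathbb{Z}_q$, making each per-block factor $1$ for the wrong characters. To circumvent this, I would allow the gadget to toggle several correlated edges per block, or tailor the gadget to the prime decomposition of $q$, until the group generated by the achievable switch values equals all of $\mathbb{Z}_q$. Verifying that such gadgets exist uniformly across characters $\mathbf{a}$, rather than for each $\mathbf{a}$ separately, is the delicate step; once done, the product bound over blocks closes the argument.
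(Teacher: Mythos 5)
The paper does not prove this statement; it restates Theorem~3.2 of Kolaitis and Kopparty \cite{kk} and uses it as a black box, so there is no in-paper proof to compare against.

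As for your outline: the Fourier-analytic reduction (bound the nontrivial character sums) is sound, and switching on a designated edge per block is a reasonable device, but the factorization step contains a genuine gap. After conditioning on all edges except the $e_B$'s, the quantity $\sum_j a_j N(F_j)$ is \emph{not} an affine function of the indicators $\mathbf{1}[e_B \in G]$: the conditioned graph will in general contain paths between blocks, so a single copy of a connected $F_j$ can use two or more of the distinguished edges, which puts degree-$\geq 2$ cross terms $\prod_{B \in S} \mathbf{1}[e_B \in G]$ into the exponent. In particular $\Delta_B$ is not even well-defined independently of the states of the other $e_{B'}$, and the expectation does not split as the advertised product $\prod_B \mathbb{E}\bigl[\exp\bigl(\tfrac{2\pi i}{q}\Delta_B\mathbf{1}[e_B\in G]\bigr)\bigr]$. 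You need a further idea here, for instance a telescoping sequential-exposure bound on the modulus, or a restriction to the positive-probability local event that a block is edge-isolated from the rest of the graph so that every copy of any $F_j$ through $e_B$ lies inside that block. By contrast, the ``arithmetic obstacle'' you raise is not one: for a fixed nontrivial $\mathbf{a}$ the factor $(1-p)+p\,e^{2\pi i\Delta_B/q}$ has modulus strictly less than $1$ as soon as $\Delta_B\not\equiv 0 \pmod q$, regardless of whether $\Delta_B$ generates $\mathbb{Z}_q$; and since $\mathbf{a}$ ranges over the finite set $\mathbb{Z}_q^l\setminus\{0\}$, you may tailor the gadget to each $\mathbf{a}$ separately (e.g.\ take $j^*$ minimizing $|E(F_{j^*})|$ among $j$ with $a_j\neq 0$ and make the isolated block a single embedded copy of $F_{j^*}$, so no other relevant $F_{j'}$ can fit and $\Delta_B\equiv a_{j^*}$) and then take a maximum over the finitely many implied constants.
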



In other words, there are no relations between the number of copies of \emph{distinct} connected subgraphs.  Therefore we may use \ref{rec} without being concerned about possible dependencies in the distribution.  Combining Theorem \ref{con} and Lemma \ref{gen} produces the following corollary (note that for any connected graph, e.g. $H \in \mathbf{H}_{\mathbf{1}}$, $p(H)=1$).

\begin{cor}\label{ets}
 Given  $A=\sqcup_{i=1}^{k} G_i$ with distinct $G_i$, if there exists some $H \in \mathbf{H}_{\mathbf{1}}$ such that $f_A(H)$ is relatively prime to $q$ and $H$ is not a gluing of $\{G_i\}_{i \in S}$ for any $S \subset [k]$, then the distribution of $N(A)$ in $G(n,p)$ is $2^{-\Omega(n)}$-close to uniform modulo $q$.
\end{cor}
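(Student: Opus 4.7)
The plan is to apply Lemma \ref{gen} to write $N(A)$ modulo $q$ as an affine function (in a single distinguished variable) of the counts $N(F)$ over connected graphs $F$, and then invoke Theorem \ref{con} to read off uniformity.

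Starting from
\[
N(A)=\prod_{i=1}^{k} N(G_{i})-\sum_{\pi \leq [k]} \sum_{H \in \mathbf{H}_{\pi}}f_{A}(H)\prod_{S \in \pi}N(H_{S}),
\]
I first observe that every count appearing on the right is $N$ of a connected graph: the $G_i$ are the connected components of $A$, and for any $H\in\mathbf{H}_\pi$, the $H_S$ are by construction the connected components of $H$. Denote the hypothesized pivot by $H^*$, so $H^*\in\mathbf{H}_{\mathbf{1}}$, $\gcd(f_A(H^*),q)=1$, and $H^*$ is not a gluing of $\{G_i\}_{i\in S}$ for any $S\subsetneq[k]$.

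The crux is to verify that $N(H^*)$ appears on the right-hand side exactly in the single summand with $\pi=\mathbf{1}$ and $H=H^*$, with coefficient $-f_A(H^*)$. Indeed, for $N(H^*)$ to enter the product $\prod_{S\in\pi}N(H_S)$, one would need $H_S=H^*$ for some block $S$ of $\pi$; by the definition of $\mathbf{H}_\pi$, this forces $H^*$ to be a gluing of $\{G_i\}_{i\in S}$. Whenever $\pi\ne\mathbf{1}$, every block $S$ is a proper subset of $[k]$, and the hypothesis rules this out. The same argument applied with $S=\{j\}$ (a singleton block) also gives $H^*\ne G_j$ for each $j$, so $N(H^*)$ is absent from the leading $\prod_i N(G_i)$ term as well. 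Putting this together,
\[
N(A) \equiv Y - f_A(H^*)\,N(H^*) \pmod{q},
\]
where $Y$ is a fixed polynomial in the counts $\{N(F) : F\in\mathcal{F}\}$ for some finite family $\mathcal{F}$ of connected graphs, none of which is $H^*$.

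Finally, I would apply Theorem \ref{con} to the finite family $\{H^*\}\cup\mathcal{F}$ of distinct connected graphs: their joint distribution modulo $q$ is $2^{-\Omega(n)}$-close to uniform on $[q]^{|\mathcal{F}|+1}$. Conditional on any fixed values of the counts in $\mathcal{F}$, the value $Y\bmod q$ is fixed while $N(H^*)\bmod q$ is (conditionally) uniform; because $f_A(H^*)$ is a unit in $\mathbb{Z}/q\mathbb{Z}$, the map $x\mapsto Y-f_A(H^*)x$ is a bijection on $\mathbb{Z}/q\mathbb{Z}$, so $N(A)\bmod q$ is conditionally uniform. Averaging over the conditioning preserves closeness to uniform within the same $2^{-\Omega(n)}$ tolerance.

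The main obstacle I anticipate is the bookkeeping in the middle paragraph: one has to be systematic in checking every partition $\pi$ (including $\pi=\mathbf{0}$ and partitions containing singleton blocks) and every $H\in\mathbf{H}_\pi$ to confirm that the hypothesis genuinely rules out every hidden occurrence of $N(H^*)$. Once that isolation step is clean, the distributional conclusion is immediate from bijectivity of multiplication by a unit modulo $q$ together with the Kolaitis--Kopparty independence result.
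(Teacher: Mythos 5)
Your proof is correct and is essentially the proof the paper has in mind: the paper simply asserts the corollary follows by combining Lemma \ref{gen} with Theorem \ref{con}, and you have carried out precisely that combination, including the necessary check that the hypothesis on $H^*$ (not a gluing of $\{G_i\}_{i\in S}$ for any proper $S$, hence in particular not equal to any $G_j$) isolates $N(H^*)$ so that it appears exactly once with coefficient $-f_A(H^*)$, a unit mod $q$. The final step, conditioning on the other connected counts and using bijectivity of $x\mapsto Y-f_A(H^*)x$ over $\mathbb{Z}/q\mathbb{Z}$, is the standard way to extract uniformity of a single coordinate from Theorem \ref{con}.
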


The rest of this section is concerned with finding, for a given $A$, an $H$ such that $f_A(H)=\pm 1$.  This is enough to show that, for any $q$, $N(A)$ is distributed uniformly modulo $q$.
\begin{definition}
Given $A=\sqcup_{i=1}^{k} G_{i}$ and a uniquely decomposable gluing \\$H$ with decomposition $(H_{1}, \ldots H_{k})$, the \emph{structure graph} of $H$, denoted by $T(H)$, is the graph whose vertices are $[k]$ and edges are pairs ${i,j}$ such that $H_{i} \cap H_{j}$ is non-empty.
\end{definition}
\begin{definition}
Given $A=\sqcup_{i=1}^{k} G_{i}$, a gluing $H$ is \emph{tree-like} if it is uniquely decomposable and its structure graph, $T(H)$, is a tree.
\end{definition}

\noindent We now show
\begin{theorem}\label{tree}
For any graph $A=\sqcup_{i=1}^{k}G_{i}$ with distinct components and tree-like gluing $H$,
$f_A(H)=(-1)^{k-1}$.
\end{theorem}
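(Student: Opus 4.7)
The plan is to extract $f_A(H)$ as the coefficient of $N(H)$ when $N(A)$ is fully expanded via the recursion of Lemma~\ref{gen}, and to identify that coefficient with a Möbius function on the Boolean lattice $2^{E(T)}$. The crucial structural input is unique decomposability. Call a partition $\pi$ of $[k]$ \emph{compatible} if each block $S \in \pi$ induces a connected subtree $T[S]$ of the structure tree. I claim that in the fully expanded recursion, only compatible partitions can contribute to the coefficient of $N(H)$, and that for each compatible $\pi$ the unique contributing graph in $\mathbf{H}_\pi$ is the canonical $\sqcup_{S \in \pi}\bigl(\bigcup_{i \in S} H_i\bigr)$; for a non-compatible $\pi$ at least one $\bigcup_{i \in S} H_i$ would be disconnected, so no copy of $H$ can realise that gluing. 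Unique decomposability cascades: each canonical subgraph $\bigcup_{i \in S} H_i$ is itself a uniquely decomposable tree-like gluing of $\{G_i\}_{i \in S}$, and consequently every $s_S$- and $p$-factor that appears along a compatible chain collapses to $1$.

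Sending a compatible partition $\pi$ to the set $E_\pi \subseteq E(T)$ of tree edges whose endpoints share a block defines a lattice isomorphism from the sublattice of compatible partitions onto $2^{E(T)}$, with $\mathbf{0}\leftrightarrow\emptyset$ and $\mathbf{1}\leftrightarrow E(T)$. Since connected $H$ lies in $\mathbf{H}_\mathbf{1}$ only, the relevant chains in the expansion are strict chains $\mathbf{0} = \pi_0 < \pi_1 < \cdots < \pi_\ell = \mathbf{1}$ of compatible partitions, and by the first paragraph each such chain contributes sign $(-1)^\ell$ with no other multiplier. Philip Hall's chain formula for the Möbius function then gives
\[
\sum_{\ell \geq 1}(-1)^\ell c_\ell \;=\; \mu_{2^{E(T)}}(\emptyset, E(T)) \;=\; (-1)^{|E(T)|} \;=\; (-1)^{k-1},
\]
because $T$ is a tree on $k$ vertices. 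Reading this off against Lemma~\ref{gen} yields $f_A(H) = (-1)^{k-1}$. An alternative, more elementary route is induction on $k$: peel off a leaf $v$ of $T$ with neighbour $u$, let $H'$ be the tree-like gluing of $\sqcup_{i\neq v}G_i$ obtained by deleting $H_v$, and argue that the recursion applied to $A = (\sqcup_{i\neq v}G_i)\sqcup G_v$ converts $f_{A'}(H') = (-1)^{k-2}$ into $f_A(H) = (-1)^{k-1}$.

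The main obstacle is the bookkeeping needed to justify that each compatible chain really does contribute a pure sign $(-1)^\ell$. When two canonical blocks $\bigcup_{i \in S} H_i$ and $\bigcup_{i \in S'} H_i$ happen to be isomorphic, the component-permutation factor $p(\cdot)$ picks up a multiplicity greater than $1$; one must show this is exactly cancelled by matching overcounts in the associated decomposition counts $s_{S}(\cdot)$ arising from the nested application of the recursion. A parallel routine check is that no non-compatible $\pi$ sneaks in a contribution: this follows by induction on $|\pi|$ once one verifies that a disconnected $\bigcup_{i \in S} H_i$ cannot be the $S$-piece of any connected gluing in $\mathbf{H}_\rho$ that factors through $H$, using that the unique decomposition of $H$ aligns with $T$.
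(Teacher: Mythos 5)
Your approach is genuinely different from the paper's, and the lattice-theoretic framing is illuminating, but it leaves open exactly the technical step that the paper's proof is built to handle.

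The paper argues by strong induction on $k$, applying the recursion of Theorem~\ref{rec} only once: it extracts from that single step the identity
$f_A(H) = -\sum_{0<\pi\le[k]}\sum_{J}f_J(H)\prod_S s_S(J_S)$,
shows via unique decomposability (their Claims~1--3) that the only surviving $J$'s are the canonical $\sqcup_{S\in\pi}H[S]$ with $p(J)=\prod s_S=1$ and $f_J(H)=(-1)^{|\pi|-1}$ by induction, then counts $H$-good partitions with $r$ blocks via the same tree-edge bijection you use (their Claim~4 gives $\binom{k-1}{r-1}$), and finally evaluates the alternating binomial sum $-\sum_{r=1}^{k-1}\binom{k-1}{r-1}(-1)^{r-1}=(-1)^{k-1}$. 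So the paper is effectively computing the M\"obius function of $2^{E(T)}$ by ranks via the defining recursion, while you compute it by chains via Hall's formula. Both are valid, and your observation that the compatible-partition sublattice is isomorphic to $2^{E(T)}$ is exactly the structural fact underlying the paper's Claim~4; your version makes that structure more explicit, which is a nice conceptual gain. Your alternative ``peel a leaf'' induction is a third, more elementary route; it is plausible but as sketched it treats the $(k-1)$-component union $A'$ as though it were a single component in the two-component recursion, which would need a separate lemma.

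The one genuine gap is the one you flag yourself. Hall's chain formula only applies once you have shown that the fully unrolled expansion contributes a bare sign $(-1)^\ell$ per compatible chain, with all $p(\cdot)$ and $s_S(\cdot)$ multipliers equal to $1$ and with no contributions from non-compatible partitions. You assert this ``cascades'' from unique decomposability but do not prove it; the paper supplies the proof, but only for the single recursion step (Claims~1--3), and then lets strong induction do the unrolling implicitly, which sidesteps the need to track multipliers along whole chains. Running Hall's formula directly, as you propose, genuinely does require verifying the multiplier-collapse at every level of the chain, not just the top one, and the case where two canonical blocks $\bigcup_{i\in S}H_i$ and $\bigcup_{i\in S'}H_i$ are accidentally isomorphic (so $p(\cdot)>1$) is precisely where the argument is delicate. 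Until that is pinned down, the proposal is an outline rather than a proof; with it pinned down (say, by reproving the analogues of the paper's Claims~1--3 for every nested application), the M\"obius-function route would be a clean and arguably more conceptual alternative to the paper's binomial-sum computation.
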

\begin{proof}
The proof is by strong induction.  When $k=2$ the statement follows from Theorem \ref{two}: when $k=2$ any uniquely decomposable gluing is tree-like.

Now consider $k\geq 3$.  By the first line of Theorem \ref{rec},
$$N(A)=\prod_{i=1}^{k} N(G_{i})-\sum_{0<\pi \leq [k]} \sum_{J \in \mathbf{H}_{\pi}}N(\sqcup_{S \in \pi} J_{S})\prod_{S \in \pi}s_{S}(J_{S}).$$
The induction hypothesis applied to $\sqcup_{S \in \pi} J_{S}$ implies that
\beq{fAH1}
f_A(H)=-\sum_{0<\pi \leq [k]} \sum_{J \in \mathbf{H}_{\pi}}f_J(H)\prod_{S \in \pi}s_{S}(J_{S})
\enq

Note that for any $J$ with $f_J(H)\neq 0$, the unique decomposability of $H$ gives that there is exactly one $0<\pi\leq [k]$ such that $J\in \mathbf{H_\pi}$.  Let us call this partition $\pi(J,H)$.  Furthermore, it also gives that $p(J)=1$ for any such $J$.  We will show

\begin{claim}\label{cl1}
For any $r-$component $J$, if $f_J(H)\neq 0$, then $J_S=H[S]$ for all $S\in \pi(J,H)$ and $f_J(H)=-1^{r-1}$.
\end{claim}

\begin{claim}\label{cl2}
If $f_J(H)\neq 0$ then $\prod_{S\in \pi(J,H)} s_S(J_S)=1$.
\end{claim}

\begin{claim}\label{cl3}
For $0<\pi\leq [k]$ the number of $J\in \mathbf{H}_\pi$ such that $J_S=H[S]$ $\forall S\in \pi$ is one if $H$ is compatible with $\pi$ and zero otherwise.  If $H$ is compatible with $\pi$, call $\pi$ {\em $H-$good}.
\end{claim}

\begin{claim}\label{cl4}
There are $\binom{k-1}{r-1}$ $H-$ good partitions $\pi$ consisting of $r$ sets.
\end{claim}

Combining Claims \ref{cl1}-\ref{cl4} with \eqref{fAH1} we immediately have (think of $r$ as the number of components of $J$, or equivalently number of sets in the partition $\pi$ related to $J$)
\begin{align*}
f_A(H)&=-\sum_{r=1}^{k-1} \binom{k-1}{r-1}(-1)^{r-1}\\
&=-1^{k-1}.
\end{align*}

\noindent {\em Proof of Claim} \ref{cl1}
The first part of the claim holds because $H$ is uniquely decomposable.  Note that $H$ is a tree-like gluing of the components of $J$.  Thus the second part of the claim is an application of our inductive hypothesis.

\medskip
\noindent {\em Proof of Claim} \ref{cl2}
This follows from the unique decomposability of $H$.

\medskip
\noindent {\em Proof of Claim} \ref{cl3}
$J_S$ must be a connected graph.  So if $H[S]$ is connected for all $S\in \pi$ then $J=\cup_{S\in \pi} H[S]$ is clearly the only $J\in \mathbf{H}_\pi$ such that $J_S=H[S]$ $\forall S\in \pi$.  If $H[S]$ is disconnected, then $H$ was not compatible with $\pi$.  Thus there are zero such graphs.

\medskip
\noindent {\em Proof of Claim} \ref{cl4}
Consider the natural mapping from a partition $0<\pi\leq [k]$ to the set $E(T(H)\setminus F)$ where $F=\cup_{S\in \pi} H[S]$.  This mapping defines a bijection from the $H-$good partitions $\pi$ consisting of $r$ distinct sets and
the set of subgraphs of $T(H)$ with $r-1$ edges.

\end{proof}
\section{Specific examples}
Here we give some applications of the theorems of the previous section.    We begin with a complete characterization of the distribution of all two-component graphs, together with explicit constructions.  We then give several families of graphs that have tree-like gluings, and therefore by Theorem \ref{tree} are uniformly distributed.  Finally, we show that no recursive proof of the simplest form exists for a uniform asymptotic distribution for arbitrary graphs.

\subsection{Two component graphs}
Any uniquely decomposable gluing of two graphs must be tree-like.  So one way to show that $N(A)$ is uniformly distributed for some two component graph $A$ would be to give a construction of a uniquely-decomposable $H$.  
In fact, such a construction exists for all two-component graphs \emph{except} a few trivial cases.
\begin{theorem}\label{alg}
If $G_1 \neq G_2$, neither $G_1$ nor $G_2$ is a single vertex, $\{G_1, G_2\} \neq \{P_{1}, P_{2}\}$ and $\{G_1, G_2\} \neq \{P_{1}, P_{3}\}$, there exists a graph $H$ such that
\\$(G_1, G_2, H, H_1, H_2)$ is a uniquely decomposable gluing and $H \neq G_1, G_2$.  Furthermore, $H$ may be constructed explicitly.
\end{theorem}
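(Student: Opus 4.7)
The plan is to give an explicit, constructive proof via vertex identification, organized into two regimes. For generic pairs $(G_1, G_2)$, the construction is to pick a vertex $v_i$ of maximum degree in each $G_i$ and identify $v_1$ with $v_2$ to form $H$. The merged vertex $v$ then has degree $\Delta(G_1) + \Delta(G_2)$, strictly larger than $\Delta(G_i)$ for either $i$. In any purported decomposition $(H_1, H_2)$ of $H$ with $H_i \simeq G_i$, the edges incident to $v$ must be split between $H_1$ and $H_2$, and the degree comparison forces $v$ to be the image of $v_i$ inside $H_i$; this pins down the seam and forces uniqueness.

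For degenerate pairs where the max-degree identification produces multiple decompositions, I would use a slightly richer construction that identifies more than one vertex or attaches a distinguishing pendant. For instance, for $(G_1, G_2) = (P_2, P_3)$ the single-vertex identification fails, but the graph $H$ on vertices $\{a,b,c,d,e,f\}$ with edge set $\{ab, bc, cd, de, bf\}$ admits a unique decomposition with $H_2$ the $P_3$ on $b,c,d,e$ and $H_1$ the $P_2$ on $a,b,f$; indeed, the only other $P_3$ copies in $H$ have edge sets $\{ab,bc,cd\}$ and $\{bf,bc,cd\}$, each leaving a complement of two non-incident edges that is not a $P_2$. For each remaining symmetric family I would give an analogous small-modification construction, case by case.

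The main obstacle is proving uniqueness in the generic case: we must rule out every alternative pair $(H_1', H_2')$ of subgraphs with $H_i' \simeq G_i$ and $E(H_1') \cup E(H_2') = E(H)$. The degree argument eliminates decompositions where the seam moves away from $v$, but alternative embeddings arising from automorphisms of $G_i$ that permute its max-degree vertex must also be excluded; this would require refining the choice of $v_i$ using a secondary invariant such as a weighted neighborhood signature to break ties. The excluded pairs $\{P_1, P_2\}$ and $\{P_1, P_3\}$ are genuine counterexamples to unique decomposability: a single edge $P_1$ has indistinguishable endpoints, and a simple case analysis on $|E(H)|$ shows that any candidate $H$ forces multiple embeddings of $P_1$ or violates the condition $H \neq G_1, G_2$.
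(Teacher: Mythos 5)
Your core construction---glue the two graphs at max-degree vertices and use the elevated degree of the merged vertex to pin down the seam---does not work in general, and the failure is systematic rather than confined to a few "degenerate" pairs. Consider $G_1=P_2$ and $G_2=K_{1,3}$: both have a unique max-degree vertex, so your rule glues those centers and produces $H=K_{1,5}$. But $K_{1,5}$ has many decompositions into an edge-disjoint $P_2$ plus $K_{1,3}$ (any $2$ of the $5$ edges form a $P_2$, the complementary $3$ form a $K_{1,3}$), so $H$ is far from uniquely decomposable. The same collapse occurs for any pair of stars, for $P_2\sqcup P_3$ as you noticed, and more generally whenever the maximum-degree vertices are "interchangeable" with nearby vertices. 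The difficulty is that raw vertex degree is not the right invariant: the merged vertex having degree $\Delta(G_1)+\Delta(G_2)$ does force it to be of high degree in any candidate $H_i$, but this does not prevent $H_i$ from re-using edges on the other side of the seam, nor does it localize $H_1\cap H_2$ to a single vertex. Your proposed fix---a "secondary invariant such as a weighted neighborhood signature"---is not developed and would need to be an actual classification of which pairs are generic, which is the hard part of the theorem.

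The paper avoids this by gluing according to \emph{block degree} $b_G(v)$ (the increase in the number of components when $v$ is deleted), not ordinary degree. Block degree has the crucial locality property (Observations $1$ and $2$ in the paper) that the block degree of a vertex in $H$ away from the seam is equal to its block degree in $H_1$ (resp.\ $H_2$), so comparisons of block degree survive passage between $H$ and the two pieces of an arbitrary decomposition. This lets the paper force $H_1\cap H_2$ to be the intended vertex or edge. The price is a six-case analysis (Cases A--F) depending on the relative block-degree profiles $B(G_1),B(G_2)$, with special handling of the tricky $B(G_1)=0$, $B(G_2)\leq 1$ regime: gluing at a far-from-$S_2$ block-leaf, gluing along an edge, or a pendant attached along a maximal "special path" in the block tree. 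Your sketch does not reproduce any of this case structure, so as written it proves the theorem only for pairs where the degree argument happens to succeed (e.g.\ cycles, cliques of distinct sizes), and leaves the remaining cases as an uncharacterized "case by case" gap. Your observations about the excluded pairs $\{P_1,P_2\}$ and $\{P_1,P_3\}$ and your explicit $P_2\sqcup P_3$ construction are both correct, but they do not repair the main argument.
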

In order to describe the construction of $H$, we define a few new terms.  $H$ will be created by taking two graphs and ``gluing" them together.
\begin{definition}\label{glue}
Given $G_{1}$ and $G_{2}$ and vertices $v_{1} \in V(G_{1})$ and $v_{2} \in V(G_{2})$, \emph{to glue} $v_{1}$ and $v_{2}$ to create a new graph $H_v$ is the natural identification: $$V(H_v)=(V(G_{1})\setminus\{v_1\})\cup (V(G_{2})\setminus \{v_2\}) \cup \{v\}$$ and $$E(H_v)=\{ \{x,y\} | \{x,y\} \in E(G_{1}) \cup E(G_{2})\} \cup\{ \{x,v\} | \{x,v_{1}\}\in E(G_{1})\}\cup \{ \{v,y\} |\{v_{2},y\} \in E(G_{2})\}.$$  
Given $G_1$ and $G_2$ and edges $\{u_1, v_1\}=e_1 \in G_1$ and $\{u_2,v_2\}=e_2 \in G_2$, \emph{to glue} $e_1$ and $e_2$ to create a new graph $H_e$ is the natural identification: $$V(H_e)=(V(G_{1})\setminus\{u_1,v_1\})\cup (V(G_{2})\setminus \{u_2, v_2\}) \cup \{u, v\}$$ and
\begin{eqnarray*}
E(H_v)=\{ \{x,y\} | \{x,y\} \in E(G_{1}) \cup E(G_{2})\}\cup\{ \{x,u\} | \{x,u_{1}\}\in E(G_{1})\cup\{ \{x,u\} | \{x,u_{2}\}\in E(G_{2})
\\
\cup\{ \{x,v\} | \{x,v_{1}\}\in E(G_{1})\}\cup \{ \{x,v_2,\} |\{x, v_2\} \in E(G_{2})\}\cup\{\{u,v\}\}.
\end{eqnarray*}
Notice that there are two possible gluings along an edge, as there are two possible pairings of the endpoints of the edges.  Constructions in this paper work for an arbitrary pairing of endpoints.
\end{definition}

Another difficulty is deciding where to glue two graphs.  To describe gluing locations, we consider the underlying connectivity structure of each graph.  We say the \emph{block degree} $b_{G}(v)$ of a vertex $v\in G$  is the number of components generated by the removal of $v$, i.e. $b(v)=$ (number of components of $G-v$) - (number of components of $G$).  Note that $b_{G}(v) >0$ if and only if $v$ is a cut vertex.  So every connected graph has at least two vertices of block degree $0$, which we will call \emph{block-leaves}.  Let $B(G)=\max_{v \in V(G)} b_{G}(v)$.

Throughout the following discussion we let $H$ refer to the graph created by gluing together $G_1$ and $G_2$ at either $v_1$ and $v_2$, or $e_1$ and $e_2$, as discussed in Definition \ref{glue}.   $H_1,H_2$ will be an arbitrary decomposition of $H$. That is, $H_i$ may be the original graph $G_i$, or it may be a different image of $G_i$ in $H$.   Note that, if $H$ is formed by gluing at a vertex, then $H_1 \cap H_2$ is a single vertex.  Similarly, if H is formed by gluing at an edge, $H_1 \cap H_2$ is a single edge.  We begin with a few observations about the block degree.

\begin{observe}
If $H$ is made by gluing together $G_1$ and $G_2$ at a vertex, then in any decomposition $H_1,H_2$, with $z=H_1\cap H_2$, for all $x\neq z \in H_1$,
$$b_{H_1}(x)= b_{H}(x)$$
and for all $x\neq z \in  H_2$,
$$b_{H_2}(x)= b_{H}(x).$$  Furthermore, $b_{H_1}(z),b_{H_2}(z)\leq b_H(z)$.
\end{observe}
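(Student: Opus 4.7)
The plan is to unpack the definition of $b$ in terms of component counts and analyze the two cases $x \neq z$ and $x = z$ separately, using the fact that the single shared vertex $z$ is the only bridge between the $H_1$ side and the $H_2$ side of $H$.

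First I would record the setup: since $G_1, G_2$ are connected and $H_i$ is an image of $G_i$, each $H_i$ is connected, and because $V(H_1) \cap V(H_2) = \{z\}$, the whole $H$ is connected. So $b_H(w) = \bigl(\text{components of } H - w\bigr) - 1$ and $b_{H_i}(w) = \bigl(\text{components of } H_i - w\bigr) - 1$ for any $w$ in the relevant vertex set.

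Next, for the main case take $x \in V(H_1) \setminus \{z\}$ and look at $H - x$. Let $C_1, \ldots, C_m$ be the components of $H_1 - x$, indexed so that $z \in C_1$ (this makes sense because $x \neq z$ means $z$ survives). In $H - x$, the vertex set $V(H_2)$ is still present and $H_2$ is connected; since $z \in V(H_2) \cap C_1$, the set $V(H_2) \cup V(C_1)$ lies in a single component of $H - x$, while $C_2, \ldots, C_m$ remain components of their own (no edges of $H$ connect them to $V(H_2) \setminus \{z\}$ because $H_1 \cap H_2 = \{z\}$). Thus $H - x$ has exactly $m$ components, so $b_H(x) = m-1 = b_{H_1}(x)$. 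The symmetric argument handles $x \in V(H_2) \setminus \{z\}$.

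Finally, for $x = z$: let $H_1 - z$ have $m_1$ components and $H_2 - z$ have $m_2$ components, so $b_{H_i}(z) = m_i - 1$. Because the only shared vertex has been removed, no edge of $H$ joins $V(H_1) \setminus \{z\}$ to $V(H_2) \setminus \{z\}$, hence the components of $H - z$ are precisely the union of those of $H_1 - z$ and $H_2 - z$, giving $m_1 + m_2$ components. Therefore $b_H(z) = m_1 + m_2 - 1 = b_{H_1}(z) + b_{H_2}(z) + 1$, which implies $b_{H_1}(z), b_{H_2}(z) \le b_H(z)$ as claimed (with the inequality always strict, in fact). There is no real obstacle here; the only thing one must be careful about is bookkeeping the component counts across the glue point $z$, which the argument above makes explicit.
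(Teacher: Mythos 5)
Your proof is correct. The paper presents this as an unproved Observation, so there is no given proof to compare against; your argument is the natural direct one: translate $b$ into component counts (using that $H$, $H_1$, $H_2$ are all connected, so $b(\cdot)=(\text{components after deletion})-1$), then note that because $V(H_1)\cap V(H_2)=\{z\}$ and $E(H)=E(H_1)\cup E(H_2)$, no edge of $H$ joins $V(H_1)\setminus\{z\}$ to $V(H_2)\setminus\{z\}$, so for $x\in V(H_1)\setminus\{z\}$ exactly the component of $H_1-x$ containing $z$ absorbs all of $H_2$ while the others are unchanged, and for $x=z$ the components of $H-z$ are the disjoint union of those of $H_1-z$ and $H_2-z$. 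Both component counts come out exactly as you compute, giving equality in the first case and the (in fact strict) inequality $b_{H_i}(z)\le b_H(z)$ in the second.

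One small point worth making explicit, which you implicitly rely on: that any decomposition $H_1,H_2$ of a vertex-glued $H$ has $|V(H_1)\cap V(H_2)|=1$. This follows from $|V(H_1)|+|V(H_2)|=|V(H)|+1$ together with the fact that $H$ has no isolated vertices, so $E(H_1)\cup E(H_2)=E(H)$ forces $V(H_1)\cup V(H_2)=V(H)$. The paper asserts this just before the Observation; citing or reproducing that short count would make your proof self-contained.
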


\begin{observe}
If $H$ is formed by gluing together $G_1$ and $G_2$ at an edge, then in any decomposition $H_1,H_2$, for all $x \in H_1\cap \bar{H_2}$,
$$b_{H_1}(x)= b_{H}(x)$$
and for all $x\in H_2\cap \bar{H_1}$,
$$b_{H_2}(x)= b_{H}(x).$$  For $x\in H_1\cap H_2$, $b_{H_1}(x),b_{H_2}(x)\leq b_H(v)$.
\end{observe}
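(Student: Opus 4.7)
The plan is to handle the two cases (vertices outside the intersection, vertices in the intersection) separately, exploiting the fact that by definition of edge-gluing the intersection $H_1 \cap H_2$ consists of exactly the single edge $\{u,v\}$ with its two endpoints. Throughout, write $b_G(x) = c(G-x) - c(G)$ where $c(\cdot)$ counts connected components, and recall that $H$, $H_1$, and $H_2$ are each connected so $c(H)=c(H_1)=c(H_2)=1$.

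For the first part, fix $x \in H_1 \cap \bar{H_2}$, so in particular $x \notin \{u,v\}$. The idea is that deleting $x$ from $H$ is the same as deleting $x$ from $H_1$ and then re-attaching the still-intact $H_2$ along the surviving edge $\{u,v\}$. More precisely, since $x \neq u,v$, the edge $\{u,v\}$ is still present in $H-x$, so $u$ and $v$ lie in a common component of $H_1-x$; call it $C_{uv}$. The subgraph $H_2$ is untouched by the deletion and is connected, so in $H-x$ it merges entirely into $C_{uv}$ via the shared vertices $u,v$, while every other component of $H_1-x$ stays intact. Hence $c(H-x) = c(H_1-x)$, which gives $b_H(x) = b_{H_1}(x)$. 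The case $x \in H_2 \cap \bar{H_1}$ is completely symmetric.

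For the second part, suppose $x \in H_1 \cap H_2$, so $x=u$ or $x=v$; without loss of generality take $x=u$. Now deleting $u$ destroys the bridging edge $\{u,v\}$, but $v$ remains in both $H_1-u$ and $H_2-u$. The components of $H_1-u$ and of $H_2-u$ all appear in $H-u$, except that the unique component of $H_1-u$ containing $v$ and the unique component of $H_2-u$ containing $v$ merge into a single component in $H-u$. Counting, $c(H-u) = c(H_1-u) + c(H_2-u) - 1$, so $b_H(u) = b_{H_1}(u) + b_{H_2}(u)$, which since both summands are non-negative immediately yields $b_{H_1}(u),b_{H_2}(u) \leq b_H(u)$. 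The same argument applied at $v$ completes the proof.

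There is no real obstacle here; the entire argument is a careful bookkeeping of connected components, and the only subtlety is tracking what happens to the bridging edge $\{u,v\}$ in each case. The parallel Observation for vertex-gluing follows the same template (with the shared vertex $z$ playing the role of the merging point), so one could even phrase both observations as instances of a single lemma about how component counts transform under amalgamation along a small connected subgraph.
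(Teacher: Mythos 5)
Your proof is correct and complete. The paper states this observation without proof, so there is no published argument to compare against, but your component-counting approach is exactly the right one. A few remarks: the paper's final inequality reads $b_{H_1}(x),b_{H_2}(x)\leq b_H(v)$, which is evidently a typo for $b_H(x)$, and you have quietly corrected it. Your argument correctly uses the fact, noted just before the observation, that in any decomposition $H_1\cap H_2$ consists of exactly the single shared edge $\{u,v\}$ with its two endpoints (this follows from $|E(H_1)|+|E(H_2)|=|E(H)|+1$ and $|V(H_1)|+|V(H_2)|=|V(H)|+2$). The one thing worth spelling out in Part 1 is that no edge of $H_2$ can have an endpoint in a component $C\neq C_{uv}$ of $H_1-x$, since $V(H_1)\cap V(H_2)=\{u,v\}\subseteq C_{uv}$, so those components survive unchanged in $H-x$; you gesture at this but it is the crux. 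The exact identity $b_H(u)=b_{H_1}(u)+b_{H_2}(u)$ you derive in Part 2 is cleaner than what the observation actually asserts, and your closing remark that vertex-gluing and edge-gluing are both instances of amalgamation over a small connected subgraph is a nice unification the paper does not make explicit.
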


With these definitions and observations in hand, we begin the proof of Theorem \ref{alg}.
\begin{proof}
Without loss of generality, we may assume $B(G_{2})\geq B(G_{1})$ and, if $B(G_{2})=B(G_1)$, then $|V(G_{1})|\leq |V(G_{2})|$.  Let $S_{i}$ be the set of vertices in $G_{i}$ of block-degree $B(G_{i})$.  We split graph pairs into six cases, according to their block degrees and other traits, and give a construction for each case. \\
\\
Case A:  $B(G_{2})> B(G_{1})$ and $B(G_2)>1$.  In this case, glue a block-leaf at maximum distance from $S_2$ to any block-leaf in $G_1$ to create $H$.

Suppose $H_2 \cap G_1 \neq \{v\}$.  $H_2\cap G_1$ is a connected graph:  if not, then because $H_2$ is connected there is a path between any two disconnected components of $H_2 \cap G_1$ within  $G_2$.  But any such path must begin and end at $v$, and therefore $H_2 \cap G_1$ itself was connected.

Thus there are at least two block-leaf vertices in $H_2 \cap G_1$, hence at least one block-leaf in $H_2\cap G_1$ not equal to $v$.  Choose one such vertex and label it $w$.  Let $R$ be the set of vertices in $H_2$ such that $\phi(R)= S_2$.  Since $b_H(v)=2<B(G_2)$, by Observation 1, we must have $R=S_2$.  Then $d(w,R)=d(w,S_2)>d(v,S_2)$  a contradiction.  Thus $H_2 \cap G_1 =\{v\}$, and the decomposition is unique.
\\
\\
Case B: $B(G_2)=B(G_1)$.  In this case, glue a block-leaf in $G_2$ to any vertex in $S_1$.  Since $b_H(v)>B(G_2)$, Observation 1 implies that $H_1\cap H_2=\{v\}$.  Therefore each component of $H\setminus\{v\}$ must be entirely contained within $H_1$ or $H_2$.  Now we use that $|V(G_2)|\geq |V(G_1)|$ to conclude $H$ is uniquely decomposable.
\\
\\


Case C: $B(G_2)=1$, $B(G_1)=0$, $G_1 \neq K_2$, and there exists a block within $G_2$ that is not isomorphic to $G_1$.  Because $B(G_2)=1$, each cut-vertex in $G_2$ connects two blocks.  Therefore we can look at $G_2$'s structure as a tree $T_{G_2}$, with vertices $v\in T_{G_2}$ corresponding to each block in $G_2$, and edges $e\in T_{G_2}$ corresponding to each cut-vertex $u \in G_2$.  

Color a vertex in this tree black if its block in $G_2$ is isomorphic to $G_1$ and white otherwise.  Let a special path in $T_{G_2}$ be any path $v_1, v_2, \ldots v_k$ in $T_{G_2}$ such that $v_1$ is white and $v_2, \ldots v_k$ are black.  Notice that by definition there must be a white vertex, therefore there must be at least one special path (possibly consisting of just one vertex).  

Let $u_1, u_2, \ldots u_M$ be a special path of maximal length.  Create $H$ by gluing a vertex of $G_1$ to a vertex of $u_M$ not in $u_{M-1}$. 
 $H$ now has a longer special path than $G_2$.  This is uniquely decomposable because any decomposition of $H$ must break this new longest special path by splitting a single vertex.  The only vertex that will split $H$ into a copy of $G_1$ and a copy of $G_2$ is the glued vertex; any other will generate two graphs, each of which contains at least two blocks.
\\
\\
Case D: $B(G_2)=1$, $B(G_1)=0$, $G_1 \neq K_2$, and $G_2$ consists of blocks isomorphic to $G_1$.  

Glue any edge of $e_1$ of $G_1$ to any edge in $e_2$ of $G_2$ whose end-vertices both have block degree 0.  Call this new glued block $G_g\subset H$, and the glued edge $e$.  We show that $H$ is uniquely decomposable by focusing on $G_g$ and how it must interact with any decomposition $H_1, H_2$.

First notice that $G_g$ cannot be entirely contained within $H_1$: $G_g$ has too many vertices.  Furthermore, $G_g$ cannot be entirely contained within $H_2$: $G_g$ is not isomorphic to $G_1$, so it cannot be a block in $H_2$.   Therefore $G_g$ contains the intersection of $H_1$ and $H_2$.  We also know that $H_1$ is entirely within $G_g$:  If $H_1$ has vertices both in and out of $G_g$, it would contain a cut vertex.  Also notice a simple counting shows that the intersection of $H_1$ and $H_2$ must consist of a single edge.  

Therefore, we have established that $H_1 \subset G_g$, $G_g \cap H_2 \neq \emptyset$, and $H_1\cap H_2 =e'$ for some edge $e' \in G_g$.  We now show that $e'=e$.  

Suppose not.  Notice that, by construction, there are no edges other than $e$ in $H$ that have one end vertex in $G_1$ and the other end vertex in $G_2$.  Therefore, if $e'\neq e$,  $e'$ must be entirely within $G_1$ or entirely within $G_2$.    Thus splitting along $e'$ means splitting $G_1$ or $G_2$.  If $G_1$ is split, one of $H_1, H_2$ is a proper subgraph of $G_1$, which is a contradiction.  If $G_2$ is split, then $G_1$ is a proper subgraph of both $H_1$ and $H_2$, which is a contradiction.
\\
\\
Case E: $B(G_2)=1$, $D(G_2)>2$ and $G_1=K_2$.  In this case, if $G_2$ contains a vertex of degree one, glue a vertex of $G_1$ to a leaf at maximum distance from $S_2'$, the set of vertices of $G_2$ of maximum degree.  Let $w$ be the vertex in $G_1$ not glued to $G_2$.  Note that $d(w,S_2')=d(v,S_2')+1$ which is strictly greater than the distance from $x$ to $S_2'$ for any leaf $x\in G_2$.  Thus $w\not\in H_2$ and we conclude the decomposition of $H$ is unique.

If $G_2$ does not contain any vertices of degree one, then glue any vertex of $G_1$ to any vertex of $G_2$.  Then $w$, the vertex in $G_1$ not glued to $G_2$, must be in $H_1$ and we conclude the decomposition of $H$ is unique.
\\
\\
Case F: $G_2=P_k$, $k>3$, and $G_1=K_2$.  In this case, glue a vertex of $K_2$ to the third vertex along the path $P_k$.  It is clear that this graph is uniquely decomposable.

\end{proof}
In fact, this construction covers almost all uniformly-distributed two-component graphs.  We fully characterize the distributions of two-component graphs by combining Theorem \ref{alg} with some examination of a few special cases.  
\begin{theorem}
For every graph $A$ with connected components $G_{1}\neq G_{2}$
\begin{itemize}
\item If neither $G_1$ nor $G_2$ is a single vertex, and $\{G_{1},G_{2}\}\neq \{P_{1}, P_{2}\}, \{P_{1}, P_{3}\}$ (where $P_{i}$ is the path with $i$ edges), $N(A)$ is $2^{-\Omega(n)}$-close to uniformly distributed in $G(n, p)$ modulo any $q$.
\item If $A=P_{1} \sqcup P_{2}$, $N(A)$ is $2^{-\Omega(n)}$-close to uniformly distributed modulo $q$
\item If $A=P_{1}\sqcup P_{3}$, $N(A)$ is $2^{-\Omega(n)}$-close to uniformly distributed modulo $q$ if and only if $q$ is odd.  If $q$ is even, $N(A)$ is $2^{-\Omega (n)}$-close to being $$P(N(A)\equiv 2i)=3/2q$$ and $$P(N(A)\equiv 2i+1)=1/2q$$ for all $i \in \{0, \ldots q/2\}$.
\item If, without loss of generality, $G_1=K_1$, $N(A)$ is $2^{-\Omega(n)}$-close to being $$P(N(A) \equiv il)=l/q,$$ where $l=\text{gcd}(q, n-|V(G_2)|)$

\end{itemize}
\end{theorem}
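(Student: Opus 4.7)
I split the argument into four cases matching the four bullets of the statement.

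For the first bullet, Theorem~\ref{alg} already constructs a graph $H \notin \{G_1, G_2\}$ that is a uniquely decomposable gluing of $(G_1, G_2)$. Any uniquely decomposable two-component gluing is automatically tree-like, since its structure graph is on two vertices and hence a single edge. Theorem~\ref{tree} therefore gives $f_A(H) = (-1)^{2-1} = -1$, which is coprime to every $q$, and Corollary~\ref{ets} immediately yields the $2^{-\Omega(n)}$-closeness to uniform mod $q$.

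For $A = P_1 \sqcup P_2$, I would enumerate the connected gluings directly. Since $|E(P_1)| + |E(P_2)| = 3$, each such gluing has at most three edges, and a finite case analysis yields the list $P_2, P_3, K_{1,3}, C_3$ with $s$-values $2,2,3,3$. By the two-component specialisation of Lemma~\ref{gen}, $f_A(H) = s(H)$ for each connected $H$. The sub-gluing $P_2$ is excluded by the hypothesis of Corollary~\ref{ets}, but the remaining values $\{2,3,3\}$ have $\gcd$ equal to $1$. I would then invoke the natural Bezout-style strengthening of Corollary~\ref{ets}: whenever the non-sub-gluing $f_A(H)$-values have $\gcd$ coprime to $q$, the joint asymptotic uniformity of distinct $N(H)$'s from Theorem~\ref{con} makes the linear combination $-\sum f_A(H) N(H) \pmod q$ uniform, hence so is $N(A)$.

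For $A = P_1 \sqcup P_3$, the same enumeration produces $P_3$ (sub-gluing, $f_A = 3$), $C_4$ ($f_A = 4$), the five-vertex $T$-tree ($f_A = 2$), $P_4$ ($f_A = 2$), and the triangle with a pendant edge ($f_A = 2$). For odd $q$ any $f_A = 2$ gluing satisfies Corollary~\ref{ets} directly. For even $q$ every non-sub-gluing $f_A$-value is even, so the extension used for $P_1 \sqcup P_2$ fails; instead I would use
\[
N(A) \equiv N(P_1) N(P_3) - 3 N(P_3) - 2\bigl[N(\Delta\text{-pend}) + 2 N(C_4) + N(T) + N(P_4)\bigr] \pmod{q}.
\]
By Theorem~\ref{con} the bracketed sum is asymptotically uniform mod $q$ and jointly independent of $(N(P_1), N(P_3))$, so $-2$ times it is uniform on the $q/2$ even residues mod $q$. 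Conditioning on $(N(P_1), N(P_3)) = (a, b)$, $N(A)$ is uniform among residues of parity $b(a+1) \pmod 2$. This parity is odd precisely when $b$ is odd and $a$ is even, an event of probability $1/4$ under the joint mod-$q$ uniformity of $(a,b)$ (which uses that $q$ is even). Combining yields mass $\tfrac{1}{2q}$ on each odd residue and $\tfrac{3}{2q}$ on each even residue, matching the statement and supplying the ``only if'' direction in one stroke.

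For the fourth bullet, each copy of $K_1 \sqcup G_2$ in a host is a copy of $G_2$ together with a disjoint vertex, so $N(A) = (n - |V(G_2)|) \cdot N(G_2)$. Writing $m = n - |V(G_2)|$ and $l = \gcd(m, q)$, multiplication by $m$ mod $q$ is an $l$-to-one homomorphism onto the cyclic subgroup $\{0, l, 2l, \ldots\}$, so the push-forward of the uniform distribution of $N(G_2) \pmod{q}$ from Theorem~\ref{con} places mass $l/q$ on each such multiple. The main obstacle is the even-$q$ subcase of bullet~3, which is the only place requiring a genuine distributional computation (separating the parity component from the within-parity uniformity) rather than invocation of Corollary~\ref{ets}; bullet~2 also requires the noted Bezout extension so as to handle moduli like $q = 6$ that share prime factors with every individual $f_A$-value. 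The supporting gluing enumerations are routine.
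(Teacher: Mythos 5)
Your proposal is correct and follows essentially the same route as the paper: bullet~1 via Theorem~\ref{alg} $\Rightarrow$ Theorem~\ref{tree} $\Rightarrow$ Corollary~\ref{ets}, and bullets~2--4 by enumerating the gluings and invoking Theorem~\ref{con}. Your gluing enumerations and $s$-values for $P_1\sqcup P_2$ and $P_1\sqcup P_3$ agree with the paper's figures, and the parity analysis for even $q$ in bullet~3 (conditioning on $(N(P_1),N(P_3))$, deducing parity $b(a+1)$, landing on mass $\tfrac{3}{2q}$ even / $\tfrac{1}{2q}$ odd) reproduces exactly the claimed distribution; bullet~4 is identical. The one place where you add something the paper leaves implicit is the ``Bezout-style strengthening'' of Corollary~\ref{ets}: as literally stated, Corollary~\ref{ets} asks for a \emph{single} non-sub-gluing coefficient coprime to $q$, which fails for $P_1\sqcup P_2$ at $q=6$ (coefficients $2,3,3$). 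The paper sidesteps this by citing Theorem~\ref{con} directly and asserting uniformity, and you are right that what makes this work is that the linear combination $-2N(P_3)-3N(K_{1,3})-3N(C_3)$ of jointly-uniform independents is uniform mod $q$ whenever $\gcd(2,3,3)=1$ is coprime to $q$; making this explicit is a genuine improvement in rigor over the paper's ``some basic modular arithmetic.'' No gaps.
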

\begin{proof}
The first item follows directly from Theorem \ref{alg}, Theorem \ref{tree}, and Corollary \ref{ets}.  The next cases, $P_1 \sqcup P_2$, $P_1 \sqcup P_3$, and $K_1 \sqcup G_2$, are solved by direct computation.

\begin{figure}[htbp]
\begin{minipage}{0.3\linewidth}
\xy
(0,0)*{1}="1"; (5,0)*{2}="2"; (10,0)*{3}="3"; (15,0)*{4}="4"; (8, -5)*{H_1};
"1";"2" **\dir{-};
"2";"3" **\dir{-};
"3";"4" ** \dir{-};
\endxy
\end{minipage}
\hfill
\begin{minipage}{.3\linewidth}
\xy
(0,0)*{1}="1"; (5,0)*{2}="2"; (10,0)*{3}="3"; (5, -5)*{H_2};
"1";"2" **\dir{-};
"2";"3" **\dir{-};
\endxy
\end{minipage}
\hfill
\begin{minipage}{.3\linewidth}
\xy
(0,0)*{2}="2"; (5,0)*{1}="1"; (5,5)*{3}="3"; (10,0)*{4}="4"; (5, -5)*{H_3};
"1";"2" **\dir{-};
"1";"3" **\dir{-};
"1";"4" **\dir{-};
\endxy
\end{minipage}
\hfill
\begin{minipage}{.3\linewidth}
\xy
(0,0)*{1}="1"; (5,0)*{2}="2"; (0,5)*{3}="3"; (3,-5)*{H_4};
"1";"2" **\dir{-};
"2";"3" **\dir{-};
"3";"1" **\dir{-};
\endxy
\end{minipage}
\end{figure}

First consider the case $A_1=P_1 \sqcup P_2$.  All gluings $H \in \mathbf{H}$ of $P_1$ and $P_2$ are illustrated above.  Note that $H_2=P_2$, $s(H_1)=2$, $s(H_2)=2$, $s(H_3)=3$, and $s(H_4)=3$.  Therefore we have $$N(A_1)=(N(P_1)-2)N(P_2)-2N(H_1)-3N(H_3)-3N(H_4).$$  By Theorem \ref{con}, we know that the tuple $(N(P_1), N(P_2), N(H_1), N(H_3), N(H_4))$ is $2^{-\Omega(n)}$-close to being uniformly distributed over $\mathbf{Z}_{q}^{5}$.  Therefore $N(A_1)$ itself is $2^{-\Omega(n)}$-close to being uniformly distributed.

Now consider the case $A_2=P_1 \sqcup P_3$.  All gluings $H \in \mathbf{H}$ of $P_1$ and $P_3$ are illustrated below.  Note that $H_2=P_3$, $s(H_1)=2$, $s(H_2)=3$, $s(H_3)=2$, $s(H_4)=4$, and $s(H_5)=2$.  Therefore we have $$N(A_2)=(N(P_1)-3)(N(P_3))-2N(H_1)-2N(H_3)-4N(H_4)-2N(H_5).$$  Again, Theorem \ref{con} and some basic modular arithmetic are enough to generate the distributions modulo $q$ in each case.
\begin{figure}[htbp]
\begin{minipage}{0.3\linewidth}
\xy
(0,0)*{1}="1"; (5,0)*{2}="2"; (10,0)*{3}="3"; (15,0)*{4}="4"; (20, 0)*{5}="5"; (10, -5)*{H_1};
"1";"2" **\dir{-};
"2";"3" **\dir{-};
"3";"4" ** \dir{-};
"4";"5" ** \dir{-};
\endxy
\end{minipage}
\hfill
\begin{minipage}{.3\linewidth}
\xy
(0,0)*{1}="1"; (5,0)*{2}="2"; (10,0)*{3}="3"; (15,0)*{4}="4"; (8, -5)*{H_2};
"1";"2" **\dir{-};
"2";"3" **\dir{-};
"3";"4" ** \dir{-};
\endxy
\end{minipage}
\hfill
\begin{minipage}{.3\linewidth}
\xy
(0,0)*{1}="1"; (5,0)*{2}="2"; (10,0)*{3}="3"; (15,0)*{4}="4"; (15,5)*{5}="5"; (8, -5)*{H_3};
"1";"2" **\dir{-};
"2";"3" **\dir{-};
"3";"4" ** \dir{-};
"3";"5" ** \dir{-};
\endxy
\end{minipage}
\hfill
\begin{minipage}{.3\linewidth}
\xy
(0,0)*{1}="1"; (5,0)*{2}="2"; (0,5)*{4}="4"; (5, 5)*{3}="3";(0,5)*{}; (8,-5)*{H_4};
"1";"2" ** \dir{-};
"2";"3" **\dir{-};
"3";"4" **\dir{-};
"4";"1" **\dir{-};
\endxy
\end{minipage}
\hfill
\begin{minipage}{.3\linewidth}
\xy
(0,0)*{1}="1"; (5,0)*{2}="2"; (10,0)*{3}="3"; (5,5)*{4}="4"; (8, -5)*{H_2};
"1";"2" **\dir{-};
"2";"3" **\dir{-};
"2";"4" ** \dir{-};
"4";"3" ** \dir{-};
\endxy
\end{minipage}
\end{figure}

Now, consider the case $A_3=K_1 \sqcup G_2$.  It is clear that $N(A)=(n-|V(G_2)|)N(G_2)$. Once more, Theorem \ref{con} and some basic modular arithmetic are enough to generate the distribution modulo $q$.

\end{proof}

\subsection{Tree-like gluings}
Graphs with more than two components are harder to work with using the methods of the previous section.  As the number of components increases, the possible gluings and decompositions also increase. Nevertheless, there are some families of multi-component graphs that admit a recursive construction.
\begin{theorem}\label{diff}
If $A=\sqcup_{i=1}^{k} G_i$ and there do not exist $i\neq j$ such that $G_i$ is a subgraph of $G_j$, then there exists $H_A$ a tree-like gluing of $\{G_i\}$ such that $H_A \neq G_i$, and $N(A)$ is $2^{-\Omega(n)}$-close to being uniformly distributed modulo $q$ for all $q$.
\end{theorem}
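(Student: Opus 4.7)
The plan is to prove this by strong induction on $k$, reducing the construction of a tree-like gluing to repeated application of Theorem \ref{alg}. For the base case $k=2$, the subgraph-free hypothesis rules out every excluded pair in Theorem \ref{alg}: neither $G_1$ nor $G_2$ can be a single vertex (since $K_1$ sits inside any nonempty graph), and $\{G_1,G_2\}$ is neither $\{P_1,P_2\}$ nor $\{P_1,P_3\}$ (since $P_1\subset P_2$ and $P_1\subset P_3$). Theorem \ref{alg} thus produces a uniquely decomposable gluing $H\neq G_1,G_2$, and any uniquely decomposable gluing of only two components is automatically tree-like.

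For $k\geq 3$, assume we have a tree-like gluing $H^{(k-1)}$ of $\{G_1,\dots,G_{k-1}\}$ with unique decomposition $(H_1,\dots,H_{k-1})$, and pick a leaf $j$ of the structure tree $T(H^{(k-1)})$. Since $G_j$ and $G_k$ also satisfy the subgraph-free hypothesis, Theorem \ref{alg} gives a uniquely decomposable pairwise gluing $P$ of $G_j$ and $G_k$, identified either at a single vertex or at a single edge of each. I would construct $H^{(k)}$ by attaching a fresh copy $G_k^*$ of $G_k$ onto the subgraph $H_j\subseteq H^{(k-1)}$ using the same identification as in $P$. The structure graph $T(H^{(k)})$ is then $T(H^{(k-1)})$ with a new pendant vertex $k$ attached to $j$, which is still a tree, so it remains only to verify that $H^{(k)}$ is uniquely decomposable.

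Given any decomposition $(H'_1,\dots,H'_k)$ of $H^{(k)}$, the goal is to show $H'_k=G_k^*$; once this is established, $(H'_1,\dots,H'_{k-1})$ must be a decomposition of $H^{(k-1)}$, which is unique by the inductive hypothesis. The argument splits into two subcases. First, if $H'_k\subseteq H^{(k-1)}$, then the edges of $G_k^*$ lying outside $H_j$ must be covered by some $H'_i$ with $i<k$; but a connected copy of $G_i$ that contains this dangling fragment of $G_k^*$ would force $G_i$ to contain the corresponding local configuration of $G_k$, violating the subgraph-free hypothesis applied near the gluing interface. Second, if $H'_k$ uses at least one edge of $G_k^*$, then the connectedness of $H'_k$ together with the fact that $G_k^*$ meets $H^{(k-1)}$ in a single vertex or edge would confine $H'_k$ to the local region $H_j\cup G_k^*$, where unique decomposability of $P$ forces $H'_k=G_k^*$.

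The hard part is the first subcase: carefully verifying that the block-degree and distance arguments used in the six cases of Theorem \ref{alg} remain valid in the larger ambient graph $H^{(k)}$, so that the local uniqueness of the pairwise gluing survives embedding in the larger construction. One subtlety is that the block degree of a vertex can increase when we enlarge the graph, potentially invalidating the comparisons on which the cases of Theorem \ref{alg} rely; choosing $j$ to be a leaf of $T(H^{(k-1)})$ should mitigate this, since the relevant block structure of $G_j$ is untouched on the side opposite to the new attachment, but the case analysis will need to be revisited with care. Once $H_A:=H^{(k)}$ is established as a tree-like gluing with $H_A\neq G_i$, Theorem \ref{tree} gives $f_A(H_A)=(-1)^{k-1}\in\{\pm 1\}$, which is coprime to every $q$, and Corollary \ref{ets} then yields the desired $2^{-\Omega(n)}$-close uniform distribution modulo $q$.
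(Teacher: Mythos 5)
Your proposal diverges substantially from the paper's proof, and the route you've chosen has a genuine, unresolved gap. The paper does not invoke Theorem \ref{alg} at all. Instead it orders the components by diameter, picks a pair of vertices $u_i,v_i$ realizing the diameter of each $G_i$, and builds a single path-like gluing by identifying $v_i$ with $u_{i+1}$. Unique decomposability then follows from a short distance argument: $v_k$ (the far endpoint of the last component) lies at distance $D(G_k)$ from the gluing vertex $u_k$, and everything outside $G_k$ lies strictly farther from $v_k$; so any connected copy $H_i$ of $G_i$ containing $v_n$ must, by the diameter ordering $D(G_i)\le D(G_k)$, be contained entirely inside $G_k$, contradicting the no-subgraph hypothesis unless $i=k$. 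Hence $H_k=G_k$ is forced, and the rest follows by induction on the remaining path. No block-degree analysis is needed, and no appeal is made to the two-component theorem.

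Your version tries to bootstrap from Theorem \ref{alg}, and the place where it breaks is exactly the part you flag as "the hard part." In the first subcase you write that a copy of $G_i$ covering the dangling fragment of $G_k^*$ "would force $G_i$ to contain the corresponding local configuration of $G_k$, violating the subgraph-free hypothesis"; this inference is not valid. Containing a partial local fragment of $G_k$ is far weaker than containing $G_k$ as a subgraph, so the hypothesis that no $G_i$ is a subgraph of $G_j$ gives you nothing here. The second subcase is also unjustified: nothing stops a connected $H'_k$ that uses an edge of $G_k^*$ from also passing through $H_j$'s other interface into the rest of $H^{(k-1)}$, so "confined to $H_j\cup G_k^*$" does not follow from connectedness alone. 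Beyond those two holes, the concern you raise about block degrees changing when $P$ is embedded in the larger graph is real and is not resolved by choosing $j$ to be a leaf of the structure tree -- the leaf choice controls only one interface of $H_j$, but several of the six cases of Theorem \ref{alg} compare block degrees or distances on the $G_j$ side, and those can be perturbed by the attachment to $H^{(k-1)}$. In short, this is a genuinely harder road than needed, and the sketch as written does not close; the paper's diameter-ordering construction avoids all of these difficulties.
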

\begin{proof}
Without loss of generality, let the graphs be listed in non-decreasing order by diameter.  Let $u_i$ and $v_i$ be vertices of $G_i$ at maximal distance from each other.  Then let $H_A$ be the graph constructed by gluing $v_i$ to $u_{i+1}$.  The structure graph is clearly a tree.

It is also uniquely decomposable, by induction:  Suppose this construction is uniquely decomposable for all $k<n$.  Now consider $H_A$ for $k=n$.  
Suppose there exists some decomposition so that $G_n \neq H_n$.  Consider $v_n$.  If $v_n \in H_i$ for $i \neq n$, then because $D(G_n)\geq D(G_i)$, all vertices in $H_i$ must be within $G_n$.  That contradicts our initial condition that no graphs is a subgraph of another, so it cannot happen.

Therefore $x \in H_n$.  Again, by a diameter argument, $G_n=H_n$.  Therefore any decomposition of $H_A$ must fix $G_n$.  The graph $H_A - H_n$ is the construction for $G_1, \ldots, G_{n-1}$, so by induction it is also uniquely decomposable.

Because $H_A$ has a tree structure and is uniquely decomposable, by Theorem \ref{tree} $N(A)$ is $2^{-\Omega(n)}$-close to uniformly distributed modulo $q$.
\end{proof}
The reader can generate many corollaries of Theorem \ref{diff}, using any subgraph-free family of graphs.  

The same``path-like" gluing shows another family of multi-component graphs is also uniquely decomposable.
\begin{theorem}
If $A=\sqcup_{i=1}^{k} G_i$ and the $G_i$ are distinct and two-connected, then $N(A)$ is $2^{-\Omega(n)}$-close to being uniformly distributed modulo $q$ for all $q$.
\end{theorem}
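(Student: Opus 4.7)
The plan is to parallel the proof of Theorem \ref{diff}: build an explicit tree-like gluing $H_A$ of the $G_i$ and then invoke Theorem \ref{tree} together with Corollary \ref{ets}. The role that the diameter hypothesis played in Theorem \ref{diff} will here be played by the block-cut structure of $H_A$, so 2-connectivity is what keeps the decomposition rigid.

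For the construction, order the components arbitrarily, pick two distinct vertices $u_i, v_i \in V(G_i)$ for each $i$ (possible since each $G_i$ is 2-connected), and iteratively identify $v_i$ with $u_{i+1}$ for $i = 1, \ldots, k-1$ to form $H_A$. By construction the structure graph $T(H_A)$ is the path $1\,\text{--}\,2\,\text{--}\cdots\text{--}\,k$, which is a tree, and $H_A$ is connected with strictly more vertices than any single $G_i$, so $H_A \in \mathbf{H}_{\mathbf{1}}$ and $H_A \neq G_i$ for every $i$.

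The main step is to verify that $H_A$ is uniquely decomposable, and this is where 2-connectivity does the work. Because each $G_i$ is 2-connected and the gluings occur at single vertices, the standard block-cut decomposition of $H_A$ has precisely $k$ blocks, namely the embedded copies of $G_1, \ldots, G_k$, and the identified vertices are exactly the cut vertices of $H_A$. I would then use the classical fact that any 2-connected subgraph of a graph is contained in a single block (two edges lying on a common cycle lie in the same block), so every copy of any $G_i$ inside $H_A$ must live in one of those $k$ blocks. Now let $(H_1, \ldots, H_k)$ be any decomposition of $H_A$ with $H_i \simeq G_i$; then $H_i \subseteq B_{\sigma(i)} \simeq G_{\sigma(i)}$ for some map $\sigma$, and since every edge of $H_A$ belongs to exactly one block, the $\{H_i\}$ must cover each block exactly. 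Covering every edge of the 2-connected graph $G_{\sigma(i)}$ by a 2-connected subgraph $H_i \simeq G_i \subseteq G_{\sigma(i)}$ forces $G_i = G_{\sigma(i)}$, and distinctness of the $G_j$ makes $\sigma$ the identity. The same block-counting argument shows that $H_A$ cannot arise as a gluing of any proper subfamily $\{G_j\}_{j \in S}$ with $S \subsetneq [k]$, since each of the $k$ blocks must still be covered by some 2-connected piece and no $G_j$ with $j \neq i$ fits the block $\simeq G_i$.

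With $H_A$ tree-like and not a gluing of any proper subset, Theorem \ref{tree} gives $f_A(H_A) = (-1)^{k-1}$, a unit modulo every $q$, and Corollary \ref{ets} delivers the desired $2^{-\Omega(n)}$-closeness to uniform; the degenerate case $k=1$ reduces to the Kolaitis-Kopparty result for connected graphs. The main obstacle is the block-decomposition step of the previous paragraph: the rest is essentially bookkeeping parallel to Theorem \ref{diff}, but identifying the blocks of $H_A$ with the $G_i$ and ruling out ``block-straddling'' copies of $G_i$ is exactly where 2-connectivity is indispensable and deserves the most care.
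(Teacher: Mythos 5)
Your proof is correct and takes essentially the paper's approach: glue the components at single vertices to form a path-structured gluing $H_A$, argue unique decomposability from the block structure, and then invoke Theorem \ref{tree} and Corollary \ref{ets}. You make two small refinements worth noting --- you drop the paper's choice of maximal-distance vertices (superfluous here since any two distinct vertices in a 2-connected component suffice), and you spell out the block-cut argument explicitly via the standard fact that a 2-connected subgraph lies inside a single block, which is cleaner than the paper's terse appeal to block-degree observations, though you should state explicitly that the block-assignment map $\sigma$ must be a bijection (a non-surjective $\sigma$ would leave a block uncovered) before concluding that each $H_i$ alone covers $G_{\sigma(i)}$.
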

\begin{proof}
Similarly to the proof of Theorem \ref{diff}, glue the graphs together at the vertices of maximum distance from each other.  $H_A$ contains $k-1$ vertices of block-degree 1, which are exactly the glued vertices.  Notice that any decomposition must split all block-degree 1 vertices into two block-degree 1 vertices; therefore there is exactly one decomposition and the graph is uniquely decomposable.
\end{proof}

\subsection{No generic gluing exists}

The previous constructions used a recursive process to create a uniquely decomposable $H$ for $G$ satisfying certain conditions.  A natural goal would be to find a generic recursive process to generate  $H$ for arbitrary $G$.  However, no such construction exists.
\begin{theorem}\label{no}
There does not exist a generic recursive construction algorithm $C$ that, for all $k$ and distinct $G_1, \ldots, G_k$, generates a uniquely decomposable $H_k$.  That is, there does not exist an algorithm $C$ that, given $G_1, \ldots G_k$ in that order, constructs uniquely decomposable $H_k$ by first calling $C$ on $G_1, \ldots, G_{k-1}$ to generate $H_{k-1}$, and then calling $C$ on $H_{k-1}, G_{k}$.
\end{theorem}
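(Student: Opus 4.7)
The plan is to establish the theorem by showing that, for any generic recursive algorithm $C$, some input sequence of distinct graphs causes $C$ to produce a graph that is not uniquely $k$-decomposable. The central structural observation is that the recursive call $C(H_{k-1}, G_k)$ treats $H_{k-1}$ as a black-box graph and depends only on the isomorphism type of the pair $(H_{k-1}, G_k)$; in particular, the algorithm has no way to see how $H_{k-1}$ was decomposed into $(G_1, \ldots, G_{k-1})$. Any copies of $G_k$ already sitting inside $H_{k-1}$ as subgraphs therefore become potential swap-targets that threaten the uniqueness of the $k$-decomposition of the output.

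To execute the argument, I would first constrain the algorithm's behavior at the bottom of the recursion by choosing $(G_1, G_2) = (P_2, P_3)$. A short direct enumeration over all vertex- and edge-gluings of $P_2$ and $P_3$ shows that the only uniquely decomposable 2-gluing, up to isomorphism, is the spider $S$ with leg lengths $1, 1, 3$; any valid algorithm must therefore output $H_2 \simeq S$. Next I would choose $G_3 = P_4$ and enumerate the possible outputs $H_3$ of $C(S, P_4)$: for each attachment site of $P_4$ onto a vertex or edge of $S$, I would compare the \emph{natural} 3-decomposition --- the newly attached $P_4$ together with the canonical $(P_2, P_3)$ decomposition of $S$ --- against an \emph{alternative} that uses one of the internal $P_4$'s of $S$ (for instance $u_1$-$u$-$v_2$-$v_3$-$v_4$) and re-covers the remaining edges with a $P_2$ and a $P_3$. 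A direct check shows, for example, that attaching $P_4$ to the leaf $v_4$ --- the choice dictated by the paper's Case~A construction --- admits two valid 3-decompositions, so \emph{this} $C$ fails on the triple $(P_2, P_3, P_4)$.

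The main obstacle is that for certain attachment sites the alternative decomposition genuinely fails to close up (attaching $P_4$ at the internal vertex $v_2$ of the long leg does give a uniquely 3-decomposable graph), so the triple $(P_2, P_3, P_4)$ alone is not enough to defeat every possible $C$. To finish the proof I would proceed by iteration on $k$: if $C$ happens to choose an attachment that remains uniquely 3-decomposable, then $H_3$ is pinned down to one of a small list of isomorphism types, and one selects $G_4$ so as to exploit the rigid structure of $H_3$ in the same way that $P_4$ exploited $S$. Since $C$ is deterministic and cannot distinguish $H_{k-1}$ from any other graph of the same isomorphism type, at every stage at least one choice of the next component $G_{k+1}$ is available that creates a duplicate subgraph inside $H_{k}$. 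For some finite $k$ the resulting $H_k$ must admit two distinct $k$-decompositions, which yields the desired contradiction. The principal technical work is bookkeeping this cascade so that at every level a "bad" next input can in fact be exhibited.
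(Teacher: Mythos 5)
Your strategy and the paper's proof are genuinely different, and yours is incomplete in a way that matters. The paper's argument is a one-shot self-referential trick: given any recursive $C$, pick $G_1,\ldots,G_{k-1}$ so that they can be glued together as a proper subgraph of $H_{k-1}=C(G_1,\ldots,G_{k-1})$ (for instance $G_1\subseteq G_2$ suffices), and then set $G_k := H_{k-1}$. The recursion then forces $C$ to compute $C(H_{k-1}, H_{k-1})$, whose output contains two copies $A, B$ of $H_{k-1}$ with $A\cup B = H_k$; decomposing $A$ as $G_1,\ldots,G_{k-1}$ and taking $B$ as $G_k$, versus decomposing $B$ and taking $A$ as $G_k$, gives two distinct decompositions of $H_k$ into $G_1,\ldots,G_k$. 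No case analysis, no search over small graphs, no iteration. This works for every $C$ simultaneously because it uses $C$'s own output as adversarial input.

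Your approach instead tries to pin down $C$'s behavior on a concrete seed $(P_2, P_3)$ and then launch a cascade of bad next inputs. The opening observations are correct: the only uniquely decomposable two-gluing of $P_2$ and $P_3$ (up to isomorphism) is the spider with legs $1,1,3$, so $H_2$ is forced, and attaching $P_4$ at the tip of the long leg does yield a graph with two valid $3$-decompositions. But you yourself identify the gap: there are attachment sites for $P_4$ that do produce a uniquely $3$-decomposable $H_3$, so the adversary must continue. The proposal then says to ``proceed by iteration on $k$'' and ``exploit the rigid structure of $H_3$'' at the next stage, with the claim that ``for some finite $k$'' the construction must fail. That termination claim is exactly what needs a proof and is not supplied. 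Without an argument that the cascade cannot continue forever --- or a closed-form description of every possible $H_k$ the algorithm could reach --- this is a plan, not a proof. A direct fix is to replace the cascade entirely with the paper's move: rather than hunting for a bad next component $G_{k+1}$ among paths, just feed $C$ its own output $H_k$ as the next component, which immediately creates the swappable pair of copies regardless of what $H_k$ turned out to be.

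One further caution: even for a cascade-style argument, the bookkeeping is heavier than the proposal suggests, because at each level the algorithm may choose among several uniquely decomposable attachments, and one must argue the adversary can force failure down every branch. The paper's argument sidesteps this by not depending on which particular uniquely decomposable $H_{k-1}$ the algorithm produced.
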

\begin{proof}
Suppose there did exist such a recursive $C$.  Let $C(G_1, \ldots G_{k-1})=H_{k-1}$.  If $G_1,\ldots, G_{k-1}$ can be glued together as a proper subgraph of $H_{k-1}$, then $C$ cannot construct a uniquely decomposable $H_{k}$ on input $G_1, \ldots, G_{k-1}, H_{k-1}$.  We note that, for example, $G_1\subseteq G_2$ is enough to give that $G_1,\ldots,G_{k-1}$ can be glued together as a proper subgraph of $H_{k-1}$.

We also point out that ordering is important to this proof; as far as we know, it is possible that an algorithm exists that, given $G_1, \ldots G_k$, first analyzes the individual graphs, then calls them in a particular order $G_{j_1}, \ldots G_{j_k}$.
\end{proof}

\section{Open questions}
There are two main open questions. What disconnected graphs are distributed uniformly?  What families of connected graphs are uniquely or tree-like composable?

Theorem \ref{ets} gives us one means of studying graph distributions.  However, it is not the case that graphs are uniformly distributed exactly when they have tree-like compositions.  (Recall that $P_1 \sqcup P_2$ is uniform but is not uniquely composable.)  It is possible that a more sophisticated analysis of the formula in Theorem \ref{rec} could give a different sufficient condition for graphs to be uniformly distributed.

We have fully characterized the two-component graphs that are uniquely composable, and hence admit tree-like compositions.  We believe an approach similar to the two-component construction given here also works for the three-component case.  However, increasing the number of components significantly complicates the analysis, and the number of cases is over twenty.  We are currently developing a simpler construction for three components.

Of course, the ultimate goal is to completely characterize the uniquely composable and tree-like composable graphs with any number of components.  We suspect that many, if not all,  graphs admit such compositions.  Theorem \ref{no} indicates a recursive approach does not work in general, but a different type of algorithm may succeed.  Even a non-constructive proof of the existence of uniquely decomposable or tree-like graphs would be interesting.


\end{document}